\newtheorem{theo}{Theorem}
\newtheorem{prop}{Proposition}[section]
\newtheorem{conj}{Conjecture}
\numberwithin{equation}{section}
\DeclareMathOperator{\Area}{Area}
\DeclareMathOperator{\comp}{comp}
\DeclareMathOperator{\Imag}{Im}
\DeclareMathOperator{\loc}{loc}
\DeclareMathOperator{\PSL}{PSL}
\DeclareMathOperator{\Real}{Re}
\DeclareMathOperator{\Res}{Res}
\DeclareMathOperator{\SL}{SL}
\DeclareMathOperator{\supp}{supp}
\DeclareMathOperator{\Vol}{Vol}
\DeclareMathOperator{\WF}{WF}
\def\WFh{\WF_h}
\title[Microlocal limits of Eisenstein functions
away from the unitarity axis]%
{Microlocal limits of Eisenstein functions\\
away from the unitarity axis}
\author{Semyon Dyatlov}
\email{dyatlov@math.berkeley.edu}
\address{Department of Mathematics, Evans Hall, University of California,
Berkeley, CA 94720, USA}
\begin{document}

\begin{abstract}
We consider a surface $M$ with constant curvature cusp ends and its
Eisenstein functions $E_j(\lambda)$. These are the plane waves
associated to the $j$th cusp and the spectral parameter $\lambda$,
$(\Delta-1/4-\lambda^2)E_j=0$. We prove that as $\Real\lambda\to
\infty$ and $\Imag\lambda\to\nu>0$, $E_j$ converges microlocally to a
certain naturally defined measure decaying exponentially along the
geodesic flow. In particular, for a surface with one cusp and a
sequence of $\lambda$'s corresponding to scattering resonances, we
find the microlocal limit of resonant states with energies away from
the real line. This statement is similar to quantum unique ergodicity
(QUE), which holds in certain other situations; however, the proof uses
only the structure of the infinite ends, not the global properties of
the geodesic flow. As an application, we also show that the scattering
matrix tends to zero in strips separated from the real line.
\end{abstract}

\maketitle

\section{Introduction}

Concentration of eigenfunctions of the Laplacian in phase space dates
back to the papers of Schnirelman~\cite{sch}, Colin de
Verdi\`ere~\cite{cdv}, and Zelditch~\cite{z1}. Their quantum
ergodicity (QE) result states that on a closed Riemannian manifold
whose geodesic flow is ergodic with respect to the Liouville measure,
a density one subsequence of eigenfunctions converges microlocally to this
measure. For manifolds with boundary, QE was proved in a special case
by~G\'erard--Leichtnam~\cite{g-l} and in general by
Zelditch--Zworski~\cite{z-z}. The paper~\cite{g-l} used the
semiclassical defect measure approach taken here.

The papers~\cite{jak,lin,l-s,sound,z2} studied the question for finite
area hyperbolic surfaces, that is hyperbolic quotients with cusps.  In
particular, \cite{z2} established QE for any such surface, if embedded
eigenfunctions are augmented with~\textsl{Eisenstein functions} on the
real line; the latter parametrize continuous spectrum of the Laplacian
arising from the presence of cusps. For the modular surface one has a
stronger statement of quantum unique ergodicity (QUE): any sequence of
Hecke--Maass forms~\cite{lin,sound} or Eisenstein functions on the
real line~\cite{l-s,jak} converges microlocally to the Liouville
measure.  Guillarmou and Naud~\cite{g-n} have recently studied
equidistribution of Eisenstein functions for convex co-compact
hyperbolic manifolds; that is, in the presence of funnels, but not
cusps. Finally, after this paper had been posted as a preprint on
arXiv, an interesting preprint~\cite{p-r-r} appeared that addresses
similar questions to those we study, in particular proving our
Theorem~\ref{t:main}, in the special case of the modular
surface. See~\cite{n,sar,z3} for recent reviews of other results.

The present paper considers an arbitrary surface with cusps and
studies phase space concentration of Eisenstein functions for the
spectral parameter $\lambda$ in the upper half-plane, away from the
real line. We show that for a given cusp and a given limit $\nu>0$ of
$\Imag\lambda$, there is only one limiting measure~--- see
Theorem~\ref{t:main}.  This statement is similar to that of QUE;
however, in contrast with the Q(U)E facts listed above, we do not use
any global properties of the geodesic flow, such as hyperbolicity or
ergodicity. Instead, we represent Eisenstein functions as plane waves;
that is, the sum of `incoming' and `outgoing' waves, where the
`incoming wave' depends only at the structure of the manifold at
infinity.  The main idea of the paper can be summarized as follows:
\textsl{the microlocal limit of a plane wave is obtained by taking the
natural measure corresponding to the `incoming' part of this wave and
propagating it along the geodesic flow}. The key difference from the
case $\Imag\lambda\to 0$ is that the corresponding semiclassical
measures are exponentially decaying, rather than invariant, along the
geodesic flow.

We restrict ourselves to the case of surfaces with exact cusp
ends. However, the method of the proof could potentially be applied to
complete Riemannian manifolds with a variety of infinite ends, or even
to more general self-adjoint semiclassical differential operators, as
long as a notion of plane waves exists. For example, in the case of a
compactly supported metric perturbation of the Euclidean metric on
$\mathbb R^3$, plane waves are solutions to the equation
$$
(\Delta_x-\lambda^2)E(\lambda,\omega;x)=0,\
\omega\in \mathbb S^2,\
\lambda\in \mathbb C,
$$
that have the following form near infinity:
$$
E(\lambda,\omega;x)=e^{-i\lambda \omega\cdot x}+E''(\lambda,\omega;x),
$$
where $E''$ is outgoing (for $\Imag\lambda>0$, this means that it lies
in $L^2$ of the whole space). The limiting measure for
$E(\lambda,\omega;x)$ with $\Real\lambda\to +\infty$, $\Imag\lambda\to
\nu>0$, and $\omega\to \omega_0\in \mathbb S^2$ can be obtained by
propagating forward along the flow the measure $e^{2\nu \omega_0\cdot
x}\,dx$ defined on $\{|x|\gg 1,\ \xi=-\omega_0\}$, similarly to the
definition of the measure $\mu_{j\nu}$ in~\eqref{e:mu-j-nu} below.
 
Our motivation comes from the natural question of quantum ergodicity
of resonant states. These replace eigenfunctions on non-compact
manifolds, and their equidistribution in phase space was studied in
the model of quantized open maps by Nonnenmacher--Rubin~\cite{n1}.  In
a similar setting, Demers and Young~\cite{d-y} have observed that a
conditionally invariant measure for a billiard with a hole is
determined entirely by its behavior in the hole; this is somewhat
parallel to the main idea of our paper presented above.  See also an
interesting physics paper by Keating et al.~\cite{n2}.

As stated in Theorem~\ref{t:resonances}, microlocal convergence for
Eisenstein functions away from the real line yields a microlocal
convergence result for resonant states with complex energies at a
fixed distance from the real line. Although this does not address the
$\Imag\lambda\to 0$ case, satisfied by most resonances (see
Conjecture~\ref{l:qe} below), it seems to be the first result on
microlocal convergence of resonant states of differential operators.

We proceed to a rigorous formulation of the results.  Let $(M,g)$ be a
two-dimensional complete Riemannian manifold with cusp ends; that is,
$M$ is the union of a compact set and finitely many \textsl{cusp
regions} $\mathcal C_1,\dots,\mathcal C_m$, where each $\mathcal C_j$
posesses a system of \textsl{canonical coordinates}
$$
(r,\theta)\in (R,\infty)\times \mathbb S^1,\
\mathbb S^1=\mathbb R/(2\pi \mathbb Z),
$$
with $R$ some constant, such that the metric $g$ on $\mathcal C_j$ has the form
\begin{equation}
  \label{e:cusp-metric}
g=dr^2+e^{-2r}d\theta^2.
\end{equation}
A classical example of such $M$ is a finite area hyperbolic surface
without conic points.  In fact, the present paper applies to finite
area hyperbolic quotients $\Gamma\backslash \mathbb H$ with conic
points as well, as one can get rid of these by passing to a finite
covering space (the cone angles are rational multiples of $\pi$, as
the corresponding elliptic transformation has to generate a discrete
subgroup of~$\PSL(2,\mathbb R)$~--- see for
example~\cite[Chapter~2]{b}).

Let $\Delta$ be the (nonnegative) Laplace--Beltrami operator
corresponding to the metric $g$; this operator is self-adjoint, its
spectrum is contained in $[0,\infty)$, and the spectrum in $[0,1/4)$
consists of finitely many eigenvalues~\cite[Section~1]{mu}.  The
Eisenstein functions
$$
E_j(\lambda),\
j=1,\dots,m,\
\Imag\lambda >0,\
\lambda\not\in (0,i/2]
$$
are unique solutions to the equation%
\footnote{For hyperbolic quotients, is more conventional to use the
parameter $s=1/2-i\lambda$, with $\lambda^2+1/4=s(1-s)$ and the physical region
$\{\Imag\lambda>0\}$ corresponding to $\{\Real s>1/2\}$. We use the
parameter $\lambda$ to emphasize that our argument belongs to general
scattering theory and is applicable to other cases such as the
Euclidean case mentioned above.}
\begin{equation}
  \label{e:main}
(\Delta-1/4-\lambda^2)u=0,\
u\in C^\infty(M),
\end{equation}
that satisfy
\begin{equation}
  \label{e:eisenstein-bd}
u-1_{\mathcal C_j} e^{(1/2-i\lambda)r}\in L^2(M).
\end{equation}
Here $1_{\mathcal C_j}$ is the indicator function of the cusp region
$\mathcal C_j$. To define $L^2(M)$, we use the volume form $\Vol$
induced by $g$. See Section~\ref{s:proofs} for details.

We would like to study in particular the (weak) limit of the measure
$|E_j(\lambda)|^2\,d\Vol$ as $\lambda$ tends to the infinity in a
certain way.  It turns out that it is more natural to study
\textsl{microlocal convergence} of $E_j(\lambda)$ in the sense of
semiclassical defect measures. A definition of these for a compact
manifold can be found in~\cite[Chapter~5]{e-z}; we use semiclassical
notation presented in Section~\ref{s:semiclassical}. Since $M$ is
noncompact and $E_j$ does not lie in $L^2$, we have to insert
compactly supported cutoffs into the definition:
%
%
\noindent\textbf{Definition.} 
\textit{Let $h_n$ be a sequence of positive numbers tending to zero and
$u_n$ be a sequence of functions on $M$ bounded uniformly on $L^2(K)$
for each compact $K\subset M$. (The Eisenstein functions satisfy this
property by~\eqref{e:epp-est-2}.)  We say that the sequence $u_n$
converges microlocally to some Radon measure $\mu$ on $T^*M$, if for
each pseudodifferential operator $A(h)\in\Psi^0$ with principal symbol
$a\in C^\infty(T^*M)$, and each $\chi\in C_0^\infty(M; \mathbb R)$, we
have
$$
\langle A(h_n)\chi u_n,\chi u_n\rangle_{L^2(M)}\to \int_{T^*M} \chi^2a\,d\mu.
$$
The measure $\mu$ is called the semiclassical measure associated to
the sequence $u_n$.}
%
%
In particular, we can take as $A(h)$ the multiplication operator by
$a(z)\in C_0^\infty(M)$:
$$
\int a(z)|u_n|^2\,d\Vol\to \int_{T^*M} a(z)\,d\mu.
$$
In other words, the measure $|u_n|^2\,d\Vol$ converges weakly to the
pushforward of $\mu$ under the projection $\pi:T^*M\to M$.

We list several basic properties of semiclassical measures; we do not
use them in the present paper, but mention them to explain why the
measure $\mu_{j\nu}$ defined below is a reasonable candidate for the
microlocal limit of Eisenstein functions. Assume that $\lambda(h)$ is
a family of complex numbers satisfying%
\footnote{Same methods apply with $\Real\lambda\to -\infty$,
with signs in certain formulas inverted.  The corresponding
semiclassical measures are exponentially increasing along the geodesic
flow and concentrated on the outgoing, rather than incoming, set
$\mathcal A^+_j$.}
\begin{equation}
  \label{e:lambda-h}
\Real\lambda(h)=h^{-1},\
\Imag\lambda(h)\to\nu>0\text{ as }h\to 0.
\end{equation}
Note that by~\eqref{e:main},
\begin{equation}
  \label{e:main-semi}
(h^2\Delta-h^2/4-(1+ih\Imag\lambda(h))^2)E_j(\lambda(h))=0.
\end{equation}
However, $P(h)=h^2\Delta$ is a semiclassical differential operator of
order 2; its principal symbol, which we denote by $p$, is the square
of the norm induced by the metric $g$ on the cotangent
bundle. Therefore, the set $\{p=1\}$ is the cosphere bundle $S^*M$,
consisting of covectors of length $1$; moreover, if $\exp(tV)$ is the
geodesic flow on $T^*M$ and $\exp(tH_p)$ is the Hamiltonian flow of
$p$, then
\begin{equation}
  \label{e:ham-flow}
\exp(tH_p)=\exp(2tV)\text{ on }S^*M.
\end{equation}
Let $h_n$ be any sequence tending to zero such that the sequence
$E_j(\lambda(h_n))$ converges microlocally to some Radon measure $\mu$.  Applying
the methods of proof of~\cite[Section~5.2]{e-z}
to~\eqref{e:main-semi}, one can get the following properties:
\begin{enumerate}
\item $\mu$ is supported on the cosphere bundle: $\mu(T^*M\setminus S^*M)=0$;
\item $\mu$ decays exponentially along the geodesic flow:
for each $A\subset T^*M$,
\begin{equation}
  \label{e:expander}
\mu(\exp(tV)A)=e^{-2\nu t}\mu(A).
\end{equation}
\end{enumerate}
We have exponential decay, rather than invariance under the flow,
in~\eqref{e:expander}, because the imaginary part of the operator
in~\eqref{e:main-semi} is asymptotic to $-2h\nu$. Note that there
exist multiple measures satisfying properties~(1) and~(2) above; each
geodesic emanating directly from some cusp carries such a measure. In
fact, it can be proved%
\footnote{Here is a sketch of the proof. Let $K=\{r\leq R+1\}\subset S^*M$;
since $K$ is compact, we have $\mu(K)=c<\infty$. Then
by~\eqref{e:expander} for each $l\in \mathbb N$,
$\mu(\exp(lV)K)=e^{-2\nu l}c$ and thus the series $\sum_l
\mu(\exp(lV)K)$ converges.  By Borel--Cantelli lemma, we get the
required statement $\mu(S^*M\setminus \mathcal A^-)=0$ if we show that
for each $\rho\in S^*M\setminus \mathcal A^-$, there exist infinitely
many $l\in \mathbb N$ such that $\exp(-lV)\rho\in K$. The latter
follows from the fact that every unit speed backwards geodesic that
leaves $K$ enters some cusp region $\mathcal C_j$; unless this
geodesic lies in $\mathcal A^-$, it will eventually bounce back and
return to $K$, where it will spend an interval of length at least 1,
containing the point $\exp(-lV)\rho$ for some integer $l$.}  that
every Radon measure $\mu$ satisfying properties~(1) and~(2) is
supported on the union $\mathcal A^-$ of the incoming sets $\mathcal
A^-_j$ defined below and thus it can be written as an integral, over
some measure on the circle, of measures supported on geodesics
emanating directly from the cusps.  The main result of the paper is
that there is only one possible semiclassical measure for Eisenstein
functions for fixed $j$ and $\nu>0$:
%
%
\begin{theo}\label{t:main}
Let $h>0$ be a small parameter tending to zero, and assume that
$\lambda(h)$ satisfies~\eqref{e:lambda-h}.  Then for each
$A(h)\in\Psi^0$ with semiclassical principal symbol $a$, and each
$\chi\in C_0^\infty(M;\mathbb R)$, we have as $h\to 0$,
\begin{align}
\label{e:main-1}
\langle A(h)\chi E_j(\lambda(h)),\chi E_j(\lambda(h))\rangle_{L^2(M)}
&\to\int \chi^2 a\,d\mu_{j\nu},\\
\label{e:main-2}
\langle A(h)\chi E_j(\lambda(h)),\chi E_{j'}(\lambda(h))\rangle_{L^2(M)}
&\to 0,\ j\neq j'.
\end{align}
Here $\mu_{j\nu}$ is the measure defined in~\eqref{e:mu-j-nu} below.
\end{theo}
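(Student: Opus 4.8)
The plan is to represent each Eisenstein function $E_j(\lambda(h))$ explicitly in the cusp regions and reduce the computation of the semiclassical measure to two ingredients: (i) the known asymptotics of the constant (in $\theta$) Fourier mode of $E_j$ in each cusp, which is the sum of the prescribed `incoming' wave $e^{(1/2-i\lambda)r}$ and an `outgoing' term governed by the scattering matrix; and (ii) propagation of singularities for the operator in \eqref{e:main-semi}. First I would separate variables in the cusp $\mathcal C_k$: expanding $E_j$ in the Fourier basis $e^{in\theta}$ on $\mathbb S^1$, each mode solves an ODE in $r$ whose solutions are modified Bessel functions; the $L^2$ condition \eqref{e:eisenstein-bd} forces exponential decay of all nonzero modes and pins down the zeroth mode as $\delta_{jk}e^{(1/2-i\lambda)r}+C_{jk}(\lambda)e^{(1/2+i\lambda)r}$ up to lower order, where $C_{jk}(\lambda)$ are the scattering matrix entries. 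The key analytic input is a polynomial bound on $\|C_{jk}(\lambda)\|$ (equivalently on $E_j$ restricted to $\{r\le R+1\}$) as $\Real\lambda\to\infty$, which should follow from the resolvent estimates / meromorphic continuation referenced in the paper (e.g. \cite{mu}, \cite{e-z}), and will be what makes the `outgoing' contribution manageable.

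Next, in semiclassical variables, $e^{(1/2-i\lambda)r}=e^{r/2}e^{-ir/h+r\Imag\lambda(h)}$ is a WKB state with phase $-r$, so microlocally it lives on $\{\xi_r=-1,\ \xi_\theta=0\}$, i.e. on the incoming set $\mathcal A^-_k$ pointing into the cusp; its pointwise size is $\sim e^{r}$ times the $h$-dependent factor, and computing $\langle A(h)\chi\cdot,\chi\cdot\rangle$ against this single WKB piece gives, after a stationary-phase / direct symbol calculation, exactly an integral against the natural measure on $\mathcal A^-_k$ weighted by the Jacobian factors $e^{2r}$ (volume form $e^{-r}dr\,d\theta$ versus $|e^{(1/2-i\lambda)r}|^2=e^{r+2r\Imag\lambda}$) — this is precisely the measure appearing in the definition \eqref{e:mu-j-nu} of $\mu_{j\nu}$. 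The `outgoing' mode $e^{(1/2+i\lambda)r}$ is $L^2$ near the cusp and, crucially, is microlocalized on the \emph{outgoing} direction $\xi_r=+1$; propagation of singularities (using \eqref{e:ham-flow} and the sign of $\Imag$ of the symbol) shows it cannot return to any fixed compact set without having first passed through the incoming set, so on a compact cutoff it contributes, together with all the nonzero Fourier modes and all the error terms, only $o(1)$. The cross term \eqref{e:main-2} vanishes because for $j\ne j'$ the leading WKB pieces live in disjoint cusps $\mathcal C_j$ and $\mathcal C_{j'}$, hence are microlocally disjoint after propagation.

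Concretely the steps are: (1) set up semiclassical notation and the explicit Fourier/Bessel decomposition of $E_j$ in each cusp, recording the scattering coefficients and the polynomial-in-$\Real\lambda$ a priori bound; (2) show the nonzero Fourier modes and the remainder in \eqref{e:eisenstein-bd} are negligible for the quadratic form against a compact cutoff, by exponential decay in $r$ plus ellipticity off $S^*M$; (3) compute the quadratic form of the pure incoming WKB state and identify the limit with $\int\chi^2a\,d\mu_{j\nu}$; (4) use propagation of singularities plus the a priori bound to dispose of the outgoing mode on compacts and to prove the cross-term statement; (5) assemble. The main obstacle I expect is step (2)–(4) uniformity: controlling the outgoing part and the remainder \emph{uniformly} as $\Real\lambda\to\infty$ — i.e. showing that the only microlocal mass that survives on a fixed compact set comes from the incoming WKB piece and not from recurrence of the outgoing wave back from the cusp or from the other cusps. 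This is exactly where the `structure at infinity, not global dynamics' philosophy is used: the escape of the outgoing wave into the cusp is a local, explicitly computable phenomenon via the Bessel asymptotics, so no ergodicity is needed, but making the error bounds survive the $h\to0$ limit against an arbitrary $A(h)\in\Psi^0$ will require care with the non-compactness of $M$ and with operator norms of $A(h)$ composed with the $r$-dependent weights.
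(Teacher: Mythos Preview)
Your plan has a genuine gap in steps~(2)--(4). The decomposition of $E_j$ into an incoming WKB piece, an outgoing zero mode, and nonzero Fourier modes is only available \emph{inside the cusp regions}. For $\chi$ supported in the compact core of $M$ (where the theorem must also hold), the incoming piece $E'_j=1_{\mathcal C_j}\widetilde\chi(r-R)e^{(1/2-i\lambda)r}$ is identically zero, so the \emph{entire} Eisenstein function there coincides with the remainder $E''_j\in L^2(M)$. Your claim that this remainder is ``negligible for the quadratic form against a compact cutoff'' and ``contributes only $o(1)$'' is therefore false as stated: $E''_j$ is $O(1)$ in $L^2$ (this is the easy resolvent bound of Proposition~\ref{l:bb}), not $o(1)$, and on the compact core it carries all the mass. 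Propagation of singularities in the form you invoke is a qualitative wavefront-set statement; it does not by itself show that $\langle A\chi E''_j,\chi E''_j\rangle\to 0$, and indeed that quantity need not tend to zero.

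What is missing is a mechanism that transports the test operator from the compact core back into the cusp, where your explicit WKB computation is valid. The paper does exactly this: it conjugates $A$ by the Schr\"odinger propagator $e^{-ith\Delta}$ and applies Egorov's theorem, so that $A$ is replaced by an operator $A_t$ whose wavefront set lies in $\{r\leq T+2t\}$, arbitrarily deep in the cusp for large $t$. Unitarity of the propagator together with the uniform bound $\|E''_j\|_{L^2}=O(1)$ then shows that the contribution of $E''_j$ to $\langle A_t E_j,E_j\rangle$ is $O(e^{-2t\Imag\lambda})$, which is killed by sending $t\to+\infty$ \emph{after} $h\to 0$. This double limit is precisely how the measure on the compact core is computed from data at infinity, and it is what your outline is missing. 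Note also that no a~priori bound on the scattering coefficients $C_{jk}(\lambda)$ is needed as input; in fact the paper obtains $S_{jj'}(\lambda)\to 0$ as a \emph{consequence} of Theorem~\ref{t:main} (Theorem~\ref{t:scattering}), so relying on such a bound would be circular or at least unnecessary.
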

%
%
Together, \eqref{e:main-1} and~\eqref{e:main-2} can be interpreted as
follows: for any $\alpha\in \mathbb C^m$, the linear combination
$\sum_j \alpha_j E_j(\lambda(h))$ converges microlocally to the
measure $\sum_j |\alpha_j|^2 \mu_{j\nu}$.

To construct the measure $\mu_{j\nu}$, we first define the incoming
set $\mathcal A^-_j$ and the outgoing set $\mathcal A^+_j$.  Let
$(r,\theta)$ be the canonical coordinates in the cusp $\mathcal C_j$
and $(r,\theta,p_r,p_\theta)$ be the induced system of coordinates on
$T^*\mathcal C_j$; define
\begin{equation}
  \label{e:a-pm}
\begin{gathered}
\mathcal A^\pm_j=\{\rho\in S^*M\mid \exists t>0: G^{\pm t}
\rho\in\widehat{\mathcal A}^\pm_j\},\\
\widehat{\mathcal A}^{\pm}_j=\{(r,\theta,p_r,p_\theta)
\in T^*\mathcal C_j\mid p_r=\pm 1,\ p_\theta=0\};
\end{gathered}
\end{equation}
In other words, $\mathcal A^+_j$ is the union of all geodesics going
directly into the $j$th cusp and $\mathcal A^-_j$ is the union of all
geodesics emanating directly from it. Note that $\mathcal A^\pm_j$
need not be closed; in fact, for hyperbolic surfaces each of them is
dense in $S^*M$.  The measure $\mu_{j\nu}$ is supported on $\mathcal
A^-_j$ and is constructed as follows: we start with the cylindrical
measure $e^{2\nu r}\,drd\theta$ on $\widehat{\mathcal A}^-_j$ and
propagate it to a measure on the whole $\mathcal A^-_j$ using the
property~(2) of semiclassical measures; the result converges because
$\nu>0$. More formally, for each continuous compactly supported
function $a$ on $T^*M$, we put
\begin{equation}
  \label{e:mu-j-nu}
\int_{T^*M} a\,d\mu_{j\nu}=\lim_{t\to +\infty}e^{-2\nu t}\int_{\mathcal C_j}
e^{2\nu r}(a\circ\exp(tV))(r,\theta,-1,0)\,drd\theta.
\end{equation}
It can be seen directly that~\eqref{e:mu-j-nu} defines a Radon measure
satisfying properties~(1) and~(2) of semiclassical measures.

The proof of Theorem~\ref{t:main} is based on the representation of
Eisenstein functions as plane waves:
$$
E_j(\lambda)=E'_j(\lambda)+E''_j(\lambda),
$$
where $E'_j(\lambda)=1_{\mathcal
C_j}\widetilde\chi(r-R)e^{(1/2-i\lambda)r}$ is the `incoming' and
$E''_j(\lambda)$ the `outgoing' part; since $\lambda$ is in the upper
half-plane, $E''_j$ is bounded in $L^2(M)$ uniformly in $h$.  We now
consider the semiclassical Schr\"odinger propagator
$e^{ith\Delta}=e^{it(h^2\Delta)/h}$; by~\eqref{e:main}, we have
formally
\begin{equation}
  \label{e:meaningless}
e^{ith\Delta}E_j(\lambda)=e^{ith(1/4+\lambda^2)}E_j(\lambda).
\end{equation}
Note that $|e^{ith(1/4+\lambda^2)}|=e^{-2t\Imag\lambda}$ is
exponentially decaying as $t\to+\infty$. For a compactly supported
(that is, having compactly supported Schwartz kernel) $A(h)\in\Psi^0$,
$$
\langle A(h)E_j(\lambda),E_j(\lambda)\rangle=
e^{-4t\Imag\lambda}\langle e^{ith\Delta}A(h)
e^{-ith\Delta}E_j(\lambda),E_j(\lambda)\rangle.
$$
However, as $e^{ith\Delta}$ is unitary and $E''_j$ is bounded in $L^2(M)$, we can
replace $E_j$ by $E'_j$ with a remainder exponentially decaying in $t$:
\begin{equation}
  \label{e:funny}
\langle A(h)E_j(\lambda),E_j(\lambda)\rangle
=e^{-4t\Imag\lambda}\langle e^{ith\Delta}A(h)
e^{-ith\Delta}E'_j(\lambda),E'_j(\lambda)\rangle+\mathcal O(e^{-2t\Imag\lambda}).
\end{equation}
(The remainder is $\mathcal O(e^{-2t\Imag\lambda})$ instead of
$\mathcal O(e^{-4t\Imag\lambda})$, as the intersection of the wavefront set of
$e^{ith\Delta}A(h)e^{-ith\Delta}$ with $S^*M$ lies in $\{r\leq 2t+T\}$
for some constant $T$ depending on the support of $A(h)$ and thus
$e^{ith\Delta}A(h)e^{-ith\Delta}E'_j(\lambda)$ should be of size
$e^{2t\Imag\lambda}$.)  We can now perform an explicit computation
using Egorov's theorem and the formula for $E'_j$ to see that as $h\to
0$, the first term on the right-hand side of~\eqref{e:funny} converges
formally to
$$
e^{-4\nu t}\int_{\mathcal C_j}e^{2\nu r}\widetilde\chi(r-R)^2
(a\circ\exp(2tV))(r,\theta,-1,0)\,drd\theta.
$$
It remains to let $t\to +\infty$ to obtain~\eqref{e:main-1};
\eqref{e:main-2} follows by a similar argument.

There is however a serious gap in the `proof' presented above;
namely, the operator $e^{ith\Delta}$ is only defined on $L^2$ and is
not properly supported. Since the function $E_j(\lambda)$ does not lie
in $L^2$, the equation~\eqref{e:meaningless} does not make any
sense. Similarly, the operator $e^{ith\Delta}A(h)e^{-ith\Delta}$ is
not compactly supported and thus one cannot apply it to
$E'_j(\lambda)$.  To fix this gap, we use cutoffs depending on $t$
and on the support of $A(h)$; see~Proposition~\ref{l:key}.

One could also try to fix the gap discussed in the previous paragraph
by using the propagator
$$
U(t)=\begin{pmatrix}\cos(t\sqrt{\Delta})&\sin(t\sqrt{\Delta})/\sqrt{\Delta}\\
-\sqrt{\Delta}\sin(t\sqrt{\Delta})&\cos(t\sqrt{\Delta})\end{pmatrix}
$$
for the Cauchy problem for the wave equation
\begin{equation}\label{e:we}
(D_t^2-\Delta_x)u(t,x)=0,\
t\in \mathbb R,\
x\in M.
\end{equation}
Indeed, since~\eqref{e:we} has finite speed of
propagation, the elements of the matrix $U(t)$ act $C^\infty(M)\to
C^\infty(M)$.  Define $\sqrt{\lambda^2+1/4}$ to have positive real
part, so that it is equal to $\lambda+\mathcal
O(h)$. By~\eqref{e:main}, the function
$e^{it\sqrt{\lambda^2+1/4}}E_j(\lambda)$ solves~\eqref{e:we} and for
$$
\vec E_j(\lambda)=(1,i\sqrt{\lambda^2+1/4})E_j(\lambda),
$$
we have $U(t)\vec E_j(\lambda)=e^{it\sqrt{\lambda^2+1/4}}\vec E_j(\lambda)$.
One could then try to argue as above, using that 
$U(t)$ is unitary on $\dot H^1(M)\oplus L^2(M)$ to estimate the
contribution of $E''_j(\lambda)$, and using that
$U(-t)(1,i\sqrt{\lambda^2+1/4}) E'_j(\lambda)$ is a Lagrangian
state associated to propagating $\widehat{\mathcal A}^-_j$ by
$\exp(tV)$, to calculate the contribution of $E'_j(\lambda)$.

As an application of Theorem~\ref{t:main}, we derive a bound on the
scattering matrix $S(\lambda)$. For each two cusps $\mathcal C_j$,
$\mathcal C_{j'}$, define $S_{j j'}(\lambda)$ by
$$
E_j|_{\mathcal C_{j'}}(\lambda;r,\theta)=\delta_{jj'} e^{(1/2-i\lambda)r}
+S_{jj'}(\lambda) e^{(1/2+i\lambda)r}+\cdots,
$$
where $(r,\theta)$ are canonical coordinates on $\mathcal C_{j'}$,
$\delta$ is the Kronecker delta, and $\cdots$ denotes the terms
corresponding to terms with $k\neq 0$ in the Fourier series
expansion~\eqref{e:fourier} of $E_j|_{\mathcal C_{j'}}$ in
the~$\theta$ variable.
%
%
\begin{theo}
  \label{t:scattering}
Consider two cusps $\mathcal C_j,\mathcal C_{j'}$ and assume that
$\mu_{j\nu}(\mathcal A^+_{j'})=\emptyset$ (in particular, this is true
for hyperbolic surfaces, as $\mathcal A^+_{j'}\cap \mathcal A^-_j$
consists of countably many geodesics). Then for $\lambda(h)$
satisfying~\eqref{e:lambda-h},
$$
S_{jj'}(\lambda(h))\to 0\text{ as }h\to 0.
$$
\end{theo}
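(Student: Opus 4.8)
The plan is to extract the leading Fourier coefficient $S_{jj'}(\lambda)$ from the behavior of $E_j$ in the cusp $\mathcal C_{j'}$ and to show that Theorem~\ref{t:main} forces it to vanish. First I would recall that in canonical coordinates $(r,\theta)$ on $\mathcal C_{j'}$ the restriction $E_j|_{\mathcal C_{j'}}$ has a Fourier expansion~\eqref{e:fourier} in $\theta$; the zeroth coefficient is a combination $\delta_{jj'}e^{(1/2-i\lambda)r}+S_{jj'}(\lambda)e^{(1/2+i\lambda)r}$ of the two model solutions of the ODE obtained by separating variables in~\eqref{e:main}, while the nonzero Fourier modes decay super-exponentially in $r$ (Bessel-type behavior) and are harmless. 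Thus, after cutting off to $\{r\geq R'\}$ for $R'$ large and averaging in $\theta$, the mass of $|E_j|^2$ in an annulus $\{R'\leq r\leq R'+1\}$ of the cusp $\mathcal C_{j'}$ is, up to an error that is small as $R'\to\infty$ and terms that cancel or are lower order, essentially $2\pi\int_{R'}^{R'+1}\bigl(\delta_{jj'}e^{(1-2\Imag\lambda)r}+|S_{jj'}(\lambda)|^2 e^{(1+2\Imag\lambda)r}+\text{cross terms}\bigr)e^{-2r}\,dr$, using $d\Vol=e^{-r}\,dr\,d\theta$. The growing-in-$r$ piece is the one carrying $|S_{jj'}(\lambda)|^2$, and it corresponds microlocally to covectors with $p_r=+1$, i.e.\ to the outgoing set $\widehat{\mathcal A}^+_{j'}$.

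The key step is then to choose a suitable test operator $A(h)$ and cutoff $\chi$ that isolate exactly this outgoing piece, and apply~\eqref{e:main-1}. Concretely, for a large constant $L$ I would take $\chi$ supported in $\mathcal C_{j'}$ near $\{r\approx L\}$ and $A(h)$ microlocally equal to $1$ near the part of $S^*M$ sitting over that region with $p_r$ close to $+1$, $p_\theta$ close to $0$ — that is, near $\widehat{\mathcal A}^+_{j'}$ — and supported away from the incoming directions $p_r\approx-1$. Evaluating $\langle A(h)\chi E_j,\chi E_j\rangle$ by the Fourier/WKB analysis of the previous paragraph gives, as $h\to 0$ (so $\Imag\lambda\to\nu$), a quantity comparable to $|S_{jj'}(\lambda(h))|^2$ times a positive constant depending on $L$ and $\chi$ (roughly $c\,e^{(2\nu-1)L}$ or similar, but in any case bounded below away from zero for fixed $L$). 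On the other hand, Theorem~\ref{t:main} says this same quantity converges to $\int\chi^2 a\,d\mu_{j\nu}$, and by construction the symbol $a$ is supported in a neighborhood of $\widehat{\mathcal A}^+_{j'}$; since $\mu_{j\nu}$ is supported on $\mathcal A^-_j$ and, by hypothesis, $\mu_{j\nu}(\mathcal A^+_{j'})=0$, shrinking the support of $a$ down to $\widehat{\mathcal A}^+_{j'}$ makes the limit $0$. Combining the two evaluations forces $\limsup_{h\to 0}|S_{jj'}(\lambda(h))|^2=0$, which is the claim. (For hyperbolic surfaces the hypothesis $\mu_{j\nu}(\mathcal A^+_{j'})=0$ holds because $\mathcal A^+_{j'}\cap\mathcal A^-_j$ is a countable union of individual geodesics, each of $\mu_{j\nu}$-measure zero by~\eqref{e:mu-j-nu}.)

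The main obstacle I anticipate is making rigorous the passage from the abstract statement "$a$ is supported near $\widehat{\mathcal A}^+_{j'}$" to a genuinely \emph{small} right-hand side: $\mathcal A^+_{j'}$ is not closed and is dense in $S^*M$ for hyperbolic surfaces, so one cannot simply take $a$ to be an indicator of a tiny closed neighborhood and hope its $\mu_{j\nu}$-mass is small. The fix is to localize in the cusp at a fixed finite height $r\approx L$ (not along the whole geodesic): near that compact piece of phase space, the portion of $\mathcal A^+_{j'}$ is just $\widehat{\mathcal A}^+_{j'}\cap\{r\approx L\}$, which \emph{is} a nice submanifold, and one uses outer regularity of the Radon measure $\mu_{j\nu}$ on the compact set $\supp\chi$ to shrink $a$ down to it and conclude the mass tends to $\mu_{j\nu}(\widehat{\mathcal A}^+_{j'}\cap\supp\chi)=0$. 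A secondary technical point is handling the cross terms between $e^{(1/2-i\lambda)r}$ and $e^{(1/2+i\lambda)r}$ in $|E_j|^2$: these oscillate like $e^{\pm 2i\Real\lambda\, r}=e^{\pm 2ir/h}$ and, being of different microlocal character (they live on neither $\{p_r=1\}$ nor $\{p_r=-1\}$ but oscillate at a nonstationary phase relative to the $p_r=\pm1$ localization), are $o(1)$ after pairing against $A(h)$ — but this needs a short stationary-phase or integration-by-parts argument to justify cleanly, together with the $\delta_{jj'}$ incoming term which is microlocalized to $p_r=-1$ and hence killed by our choice of $A(h)$.
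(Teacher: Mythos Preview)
Your proposal is correct and follows essentially the same route as the paper: extract the outgoing zeroth Fourier mode in $\mathcal C_{j'}$, bound $|S_{jj'}|$ by the microlocal mass of $E_j$ near $\widehat{\mathcal A}^+_{j'}$, apply Theorem~\ref{t:main}, and shrink the localization onto $\widehat{\mathcal A}^+_{j'}$ using the hypothesis $\mu_{j\nu}(\mathcal A^+_{j'})=0$. The paper's implementation is slightly slicker in that it writes the test operator as $A_\delta=\chi(hD_\theta/\delta)A$ with $A$ acting only in $r$, so that the lower bound $|S_{jj'}|\leq C\|A_\delta E_j\|_{L^2}+\mathcal O(h^\infty)$ is \emph{uniform} in $\delta$ (only the $k=0$ term of Parseval is used, and $\chi(0)=1$), which cleanly handles the double limit $h\to 0$ then $\delta\to 0$ that you anticipated as the main obstacle.
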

%
%
In other words,
$$
S_{jj'}(\lambda)=o(1),\
0<C^{-1}<\Imag\lambda<C,\
\Real\lambda\to\infty.
$$
This estimate is not always optimal: in the special case of the modular
surface $M=\PSL(2,\mathbb Z)\backslash \mathbb H$, the scattering
coefficient $S(\lambda)$ is related to the Riemann zeta function by
the formula~\cite[Section~2.18]{tit}
$$
S(\lambda)=\sqrt\pi{\zeta(-2i\lambda)\Gamma(-i\lambda)
\over\zeta(1-2i\lambda)\Gamma(1/2-i\lambda)}.
$$
Given that both $\zeta(z)$ and $\zeta^{-1}(z)$ are bounded in every
half-plane $\{\Real z>1+C^{-1}\}$ (either by Dirichlet series or by
Euler product representation), the basic bound on the zeta function in
the critical strip~\cite[(5.1.4)]{tit} gives
\begin{equation}
  \label{e:tit}
|S(\lambda)|=\mathcal O(|\lambda|^{-\min(\Imag\lambda,1/2)-}),\ \Imag\lambda\geq C^{-1}.
\end{equation}
The bound~\eqref{e:tit} is optimal for $\Imag\lambda>1/2$, and no
optimal bounds are known for~$0<\Imag\lambda<1/2$. It would be
interesting to see if semiclassical methods can yield an effective
bound on the scattering coefficients, and compare such bound
to~\eqref{e:tit}.

Finally, we address the question of microlocal convergence of resonant
states. Assume that for some $\lambda$, the matrix $S(\lambda)$ is not
invertible; that is, there exists $\alpha\in\mathbb C^m\setminus
\{0\}$ such that for each $j'$,
\begin{equation}
  \label{e:outgoing}
\sum_j\alpha_j S_{jj'}(\lambda)=0.
\end{equation}
This is equivalent to saying that $-\lambda$ is a \textsl{resonance};
i.e., a pole of the meromorphic continuation of the resolvent
$(\Delta-1/4-\lambda^2)^{-1}$ to the lower half-plane (see for
example~\cite[Section~5]{mu}). Moreover, a resonant state at
$-\lambda$ is given by $\sum_j\alpha_j E_j(\lambda)$.
Theorem~\ref{t:main} immediately implies
%
%
\begin{theo}\label{t:resonances}
Assume that $-\lambda_n$ is a sequence of resonances
satisfying~\eqref{e:lambda-h} for some $h_n\to 0$.  Let $u_{n}$ be a
sequence of corresponding resonant states and assume that it converges microlocally
to some measure $\mu$.  Then $\mu$ is a linear combination of the
measures $\mu_{1\nu},\dots,\mu_{m\nu}$ defined by~\eqref{e:mu-j-nu}.
\end{theo}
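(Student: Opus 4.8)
The plan is to deduce Theorem~\ref{t:resonances} from Theorem~\ref{t:main} by diagonalising the sesquilinear form $u\mapsto\langle A(h_n)\chi u,\chi u\rangle_{L^2(M)}$ on the span of $E_1(\lambda_n),\dots,E_m(\lambda_n)$. By the discussion around~\eqref{e:outgoing}, a resonant state at $-\lambda_n$ has the form $u_n=\sum_j\alpha^{(n)}_j E_j(\lambda_n)$ for some $\alpha^{(n)}\in\mathbb C^m$ satisfying the outgoing condition~\eqref{e:outgoing}. I expect the only step requiring an actual argument to be the boundedness of the coefficient vectors $\alpha^{(n)}$; granting that, the statement follows by expanding the form and invoking~\eqref{e:main-1}--\eqref{e:main-2}.

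To see that $\sup_n|\alpha^{(n)}|<\infty$, I would use that $u_n$ is genuinely a resonant state and not merely an arbitrary linear combination. In a cusp $\mathcal C_j$ the zeroth Fourier coefficient in $\theta$ of $E_{j'}(\lambda_n)$ solves a second order ODE whose solution space is spanned by $e^{(1/2-i\lambda_n)r}$ and $e^{(1/2+i\lambda_n)r}$, and by the definition of the scattering coefficients it equals $\delta_{j'j}e^{(1/2-i\lambda_n)r}+S_{j'j}(\lambda_n)e^{(1/2+i\lambda_n)r}$. Hence the zeroth Fourier coefficient of $u_n$ in $\mathcal C_j$ is
$$
\alpha^{(n)}_j e^{(1/2-i\lambda_n)r}+\left(\sum_{j'}\alpha^{(n)}_{j'}S_{j'j}(\lambda_n)\right)e^{(1/2+i\lambda_n)r}=\alpha^{(n)}_j e^{(1/2-i\lambda_n)r},
$$
the second term vanishing by~\eqref{e:outgoing}. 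Integrating $|u_n|^2\,d\Vol$ over a fixed slab $\{r_0\le r\le r_0+1\}$ inside $\mathcal C_j$ and using orthogonality of the Fourier modes then gives $\|u_n\|_{L^2(K)}^2\ge c\,|\alpha^{(n)}_j|^2$ for a suitable compact $K$ and a constant $c>0$ independent of $n$; here one uses that $\Imag\lambda_n\to\nu$, so $\Imag\lambda_n\ge\nu/2$ for large $n$. Since microlocal convergence of $u_n$ presupposes, by the Definition in the introduction, that $u_n$ is bounded in $L^2(K)$ for each compact $K$, we conclude that $\alpha^{(n)}$ is bounded. (Alternatively, one can bypass this Fourier computation by a contradiction argument: if $|\alpha^{(n)}|$ were unbounded, normalising $u_n$ by $|\alpha^{(n)}|$ and applying the computation of the next paragraph would produce a nonzero microlocal limit for a sequence with $\|\chi u_n\|_{L^2}\to 0$, which is impossible.)

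Once $\alpha^{(n)}$ is known to be bounded, fix any subsequence along which $\alpha^{(n)}\to\alpha\in\mathbb C^m$. For every $A(h)\in\Psi^0$ with principal symbol $a$ and every $\chi\in C_0^\infty(M;\mathbb R)$,
$$
\langle A(h_n)\chi u_n,\chi u_n\rangle_{L^2(M)}=\sum_{j,j'}\alpha^{(n)}_j\overline{\alpha^{(n)}_{j'}}\,\langle A(h_n)\chi E_j(\lambda_n),\chi E_{j'}(\lambda_n)\rangle_{L^2(M)}
$$
is a finite sum in which the scalar coefficients converge to $\alpha_j\overline{\alpha_{j'}}$ and, by~\eqref{e:main-1} and~\eqref{e:main-2}, the inner products converge to $\delta_{jj'}\int\chi^2 a\,d\mu_{j\nu}$; hence the left-hand side converges to $\sum_j|\alpha_j|^2\int\chi^2 a\,d\mu_{j\nu}$. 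Thus along this subsequence $u_n$ converges microlocally to $\sum_j|\alpha_j|^2\mu_{j\nu}$. Since $u_n$ was assumed to converge microlocally to $\mu$, every subsequence does as well, and a microlocal limit is uniquely determined by the pairings appearing in the Definition; therefore $\mu=\sum_j|\alpha_j|^2\mu_{j\nu}$, a linear combination of $\mu_{1\nu},\dots,\mu_{m\nu}$, as claimed. The only real obstacle is the boundedness of $\alpha^{(n)}$; everything else is immediate from Theorem~\ref{t:main}.
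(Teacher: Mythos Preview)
Your argument is correct and follows the same route the paper takes: the paper simply states that Theorem~\ref{t:main} ``immediately implies'' Theorem~\ref{t:resonances}, relying on the remark after Theorem~\ref{t:main} that any combination $\sum_j\alpha_jE_j(\lambda(h))$ converges microlocally to $\sum_j|\alpha_j|^2\mu_{j\nu}$. You have supplied the details the paper leaves implicit, in particular the boundedness of the coefficient vectors $\alpha^{(n)}$ (which is genuinely needed, since for resonant states these coefficients depend on $n$), and your extraction of $\alpha^{(n)}_j$ from the zeroth Fourier mode in $\mathcal C_j$ via the outgoing condition~\eqref{e:outgoing} is exactly right.
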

%
%
The fact that semiclassical measures for resonant states are
exponentially decaying along the geodesic flow is parallel
to~\cite[Theorem~4]{n-z}; a similar fact has been obtained in the
setting of quantized open maps in~\cite{n1}. However, the
concentration statement~\cite[(1.15)]{n-z} is vacuous in our case, as
the set $\Gamma_E^-$ from~\cite{n-z} (not $\Gamma_E^+$, as
$\Real(-\lambda)<0$) is the whole cosphere bundle. In fact, \cite{n-z}
heavily use the fact that resonant states are outgoing, while
Eisenstein functions studied in the present paper need not satisfy the
outgoing condition (which in our case is~\eqref{e:outgoing}).

For surfaces with only one cusp, resonant states away from the real
line have to converge microlocally to a single measure; however, we do
not address the question of the behavior of the kernel of $S(\lambda)$
for multiple cusps. For other types of infinite ends (such as the
convex cocompact case), the scattering matrix $S(\lambda)$ is replaced
by an operator acting on a certain infinite dimensional Hilbert
space. The distribution of resonances in strips of fixed size and the
behavior of the kernel of $S(\lambda)$ is controlled by
the~\textsl{trapping} phenomenon in the compact part of our manifold,
while the restriction on the set of possible semiclassical measures
provided in this paper only uses behavior at infinity.

Finally, it would be natural to ask an analogue of the quantum
ergodicity question: where do most resonant states microlocally
converge if we drop the restriction $\Imag\lambda\to\nu>0$?
Theorem~\ref{t:resonances} does not provide the answer because most
resonances are located $o(1)$ close to the real line. To make this
last statement precise, we assume that there are only finitely many
embedded eigenvalues (which is true under certain genericity
assumptions~--- see~\cite[Th\'eor\`eme~7]{cdvG} and~\cite{p-s}),
let~$\Res$ be the set of resonances, counted with multiplicities, and
recall the Weyl law~\cite[(0.5)]{mu}:
$$
|\Res\cap\{|\lambda|\leq h^{-1}\}|={\Area(M)\over 2\pi}h^{-2}(1+o(1)).
$$
We claim that for each $\varepsilon>0$,
\begin{equation}
  \label{e:strip}
|\Res\cap\{|\lambda|\leq h^{-1}, \Imag\lambda<-\varepsilon\}|=o(h^{-2}).
\end{equation}
The proof of~\eqref{e:strip} is based on~\cite[Corollary~3.29]{mu}:
\begin{equation}
  \label{e:carleman}
\sum_{\lambda\in\Res} {|\Imag\lambda|\over |\lambda|^2}<\infty.
\end{equation}
If~\eqref{e:strip} is false, then there exist $\varepsilon,\delta>0$
and a sequence $h_j\to 0$ such that
$$
|\Res\cap\{|\lambda|\leq h_j^{-1},\ \Imag\lambda<-\varepsilon\}|\geq 2\delta h_j^{-2}.
$$
However, by the upper bound provided by the Weyl law, there exists a
constant $C_0$ such that
\begin{equation}
  \label{e:strip-2}
|\Res\cap\{C_0^{-1}h_j^{-1}\leq |\lambda|\leq h_j^{-1},\
\Imag\lambda<-\varepsilon\}|\geq \delta h_j^{-2}.
\end{equation}
We pass to a subsequence of $h_j$ such that $h_j/h_{j+1}>C_0$. Then
the sets of resonances counted in~\eqref{e:strip-2} for different $j$
do not intersect each other, and the sum
of~$|\Imag\lambda|/|\lambda|^2$ over each of these sets is bounded
from below by $\varepsilon\delta$, contradicting~\eqref{e:carleman}.

By~\eqref{e:strip}, a density one subsequence of resonances converges
to the real line; the corresponding semiclassical measures are
invariant with respect to the geodesic flow and a natural candidate is
the Liouville measure:
%
%
\begin{conj}\label{l:qe}
Assume that $M$ is a surface with cusp ends whose geodesic flow is
ergodic with respect to the Liouville measure $\mu_0$. Then there
exists a density one subsequence of resonant states in any strip
$\{\Imag\lambda>-C\}$ converging microlocally to $\mu_0$.
\end{conj}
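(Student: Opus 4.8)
\section{A possible approach to Conjecture~\ref{l:qe}}

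The plan would be to run the semiclassical Schnirelman--Zelditch--Colin de Verdi\`ere argument (cf.~\cite{z-z} and, for Eisenstein functions on the real line, \cite{z2}) for the family of resonant states. Observe first that by~\eqref{e:strip} a density one subsequence of resonances satisfies $\Imag\lambda\to 0$, so that the decay rate $\nu$ in~\eqref{e:expander} degenerates to $0$ and \emph{any} semiclassical measure of the corresponding resonant states is invariant under the geodesic flow and supported on $S^*M$; this is what makes ergodicity relevant. One then wants three ingredients: (i) a \emph{local Weyl law with observable}, computing $\sum\langle A\chi u_n,\chi u_n\rangle$ over resonances $-\lambda_n$ in a window $|\lambda|\in[(1-\varepsilon)h^{-1},(1+\varepsilon)h^{-1}]$, $\Imag\lambda>-C$, and showing it equals $\bar a$ times the number of such resonances up to $o(h^{-2})$, where $a$ is the principal symbol of $A$ and $\bar a=\Vol(S^*M)^{-1}\int_{S^*M}a\,d\mu_0$; (ii) a variance bound, the same sum with $\langle A\chi u_n,\chi u_n\rangle$ replaced by $|\langle A\chi u_n,\chi u_n\rangle-\bar a|^2$ being $o(h^{-2})$; and (iii) the standard Chebyshev and diagonal extraction of a density one subsequence, over a countable dense family of pairs $(A,\chi)$.

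For ingredient~(i) I would start from a regularized trace of $f(h^2\Delta)A$ with $f$ supported near energy $1$ and deform the spectral contour into the lower half-plane. For hyperbolic surfaces with cusps this is controlled by the Selberg trace formula and its refinements in M\"uller~\cite{mu}: the continuous spectrum contributes a term expressed, as in~\cite{z2}, through the Maass--Selberg relations and the logarithmic derivative of $\det S(\lambda)$, while each resonance $-\lambda_n$ contributes a residue pairing $u_n$ with a co-resonant state. The one-cusp case is cleanest, since there $u_n$ is proportional to $E_1(\lambda_n)$ at a zero of the scalar function $S_{11}$, so $\langle A\chi u_n,\chi u_n\rangle$ is a value of the real-analytic function $\lambda\mapsto\langle A\chi E_1(\lambda),\chi E_1(\lambda)\rangle$ and the zeros of $S_{11}$ are counted by the argument principle. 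The geometric side produces, to leading order, the semiclassical Weyl term $(2\pi h)^{-2}\int f(p)\,a\,dxd\xi$, which after the window localization matches $\bar a$ times the resonance count of~\cite[(0.5)]{mu}. One also needs here a non-concentration estimate, uniform as $\Imag\lambda\to 0$, guaranteeing that a fixed positive fraction of the mass of $u_n$ sits in a fixed compact set, so that inserting $\chi$ does not change the count; the plane-wave representation, together with the fact that the measures built from~\eqref{e:mu-j-nu} keep a fixed compact mass as $\nu\to 0$, should supply this.

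Ingredient~(ii) would then be routine: replace $A$ by its time-$T$ average $\langle A\rangle_T=\frac1T\int_0^T e^{ith\Delta}A\,e^{-ith\Delta}\,dt$, legitimate for each fixed $T$ once one localizes with $t$-dependent cutoffs exactly as in Proposition~\ref{l:key} (the resonant states propagate well under the Schr\"odinger flow for finite times, their mass moving into the cusps in a controlled way), apply the local Weyl law to the self-adjoint operators $\langle A\rangle_T$ and $\langle A\rangle_T^2$, and use Egorov's theorem to bound the variance by $\|\,\overline{\langle a\rangle_T}-\bar a\,\|_{L^2(S^*M,\mu_0)}^2$ up to $o(h^{-2})$, where $\langle a\rangle_T=\frac1T\int_0^T a\circ\exp(tH_p)\,dt$; ergodicity makes this $\to 0$ as $T\to\infty$, and a slow diagonal choice $T=T(h)\to\infty$ finishes the estimate.

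The main obstacle is ingredient~(i), and it is the complete absence of self-adjointness, orthogonality, or completeness for resonant states. There is no spectral theorem to anchor the trace; the $u_n$ need not be uniformly linearly independent when resonances cluster; and controlling the contribution to the trace of the resonances far from the window and of the continuous spectrum --- exactly the difficulty that makes sharp remainders in the resonance Weyl law hard --- is what one needs in order to reach an $o(h^{-2})$ error. A further structural point in the multi-cusp case is that the residue at $-\lambda_n$ pairs $u_n$ with a co-resonant state, and identifying this pairing with $\langle A\chi u_n,\chi u_n\rangle$ up to a computable normalization requires understanding the kernel of $S(\lambda)$ near its poles, a question this paper deliberately sidesteps. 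For these reasons I expect a full proof to need genuinely new input beyond the present methods, which is why the statement is left as a conjecture.
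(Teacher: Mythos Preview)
The statement you are addressing is a \emph{conjecture} in the paper, not a theorem: the paper offers no proof and explicitly leaves it open, attributing the formulation to Zworski. There is therefore nothing to compare your proposal against.

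That said, your write-up is honest about its status --- you sketch the natural Schnirelman-type strategy and then correctly isolate the genuine obstruction, namely the absence of any self-adjoint framework (no orthogonality, no completeness, no spectral-side trace formula) that would let one prove a local Weyl law with observable for resonant states with an $o(h^{-2})$ remainder. Your final paragraph is the accurate summary: ingredients~(ii) and~(iii) are standard once~(i) is in hand, and~(i) is exactly what is not known. So this is not a proof, nor do you claim it is; it is a reasonable discussion of why the conjecture is hard, and on that level it is fine.
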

%
%
The proof of~\eqref{e:strip} and Conjecture~\ref{l:qe} have been
suggested by Maciej Zworski.

The paper is organized as follows. In Section~\ref{s:semiclassical},
we review some notation and facts from semiclassical analysis. In
Section~\ref{s:proofs}, we present basic facts about Eisenstein
functions and prove Theorems~\ref{t:main} and~\ref{t:scattering}.
Finally, in Section~\ref{s:hyperbolic}, we consider the special case
of finite area hyperbolic surfaces and describe the canonical measures
$\mu_{j\nu}$ from~\eqref{e:mu-j-nu} via the action of the fundamental
group of $M$; we also prove Theorem~\ref{t:main} in this case for
$\Imag\lambda>1/2$ using the classical definition of Eisenstein
functions as series.

\section{Semiclassical preliminaries}\label{s:semiclassical}

In this section, we briefly review the portions of semiclassical
analysis used below; the reader is referred to~\cite{e-z} for a
detailed account on the subject.

We assume that $h>0$ is a parameter, the smallness of which is implied
in all statements below. Consider the algebra $\Psi^s(\mathbb R^d)$ of
pseudodifferential operators with symbols in the class $S^s(\mathbb
R^d)$, defined as follows: a function $a(x,\xi;h)$ smooth in
$(x,\xi)\in \mathbb R^{2d}$ lies in this class if and only if for each
compact set $K\subset \mathbb R^d$ and each multiindices
$\alpha,\beta$, there exists a constant $C_{\alpha\beta K}$ such that
$$
\sup_{(x,\xi)\in K\times \mathbb R^d}|\partial_\alpha^x\partial_\beta^\xi
a(x,\xi;h)|\leq C_{\alpha\beta K}\langle\xi\rangle^{s-|\beta|}.
$$
The only difference with the invariant symbol classes studied
in~\cite[Section~9.3]{e-z} is that we do not require uniform bounds as
$x\to\infty$. However, this does not matter in our situation, as we
will mostly use \textsl{compactly supported} operators; e.g. those
operators whose Schwartz kernels are compactly supported in $\mathbb
R^d\times \mathbb R^d$.  As in~\cite[Section~13.2]{e-z}, we can define
the algebra $\Psi^s(M)$ for any manifold $M$. The compactly supported
elements of $\Psi^s(M)$ act $H^t_{\hbar,\loc}(M)\to
H^{t-s}_{\hbar,\comp}(M)$ with norm $\mathcal O(1)$ as $h\to 0$, where
$H^t_{\hbar,\loc}$ and $H^{t-s}_{\hbar,\comp}$ are semiclassical
Sobolev spaces.

To avoid discussion of simultaneous behavior of symbols as
$\xi\to\infty$ and $h\to 0$, we further require that the symbols of
elements of $\Psi^s$ are \textsl{classical}, in the sense that they
posess an asymptotic expansion in powers of $h$, with the term next to
$h^k$ lying in $S^{s-k}$ (see~\cite[Section~2.1]{zeeman} for details).
Following~\cite[Section~2]{v}, we introduce the fiber-radial
compactification $\overline T^*M$ of the cotangent bundle.
(The use of $\overline T^*M$ slightly simplifies the statement of
Proposition~\ref{l:elliptic} below. However, as all other microlocal
analysis is happening in a compact subset of $T^*M$, one could easily
avoid fiber-radial compactification if needed.)
Each $A\in\Psi^s$ has an invariantly defined (semiclassical) principal
symbol $\sigma(A)=a\in C^\infty(T^*M)$, and $\langle\xi\rangle^{-s}a$
extends to a smooth function on $\overline T^*M$. We then define the
characteristic set of $A$ as $\{\langle\xi\rangle^{-s}a= 0\}\subset
\overline T^*M$ and say that $A$ is elliptic on some $U\subset
\overline T^*M$, if $U$ does not intersect the characteristic set of
$A$.

We use the definition of semiclassical wavefront set $\WFh(A)\subset
\overline T^*M$ for $A\in\Psi^s(M)$ found in~\cite[Section~2]{v}
or~\cite[Section~2.1]{zeeman}.  The wavefront set of $A$ is empty if
and only if $A$ lies in the algebra $h^\infty\Psi^{-\infty}(M)$ of
smoothing operators such that each of $C^\infty(M\times M)$ seminorms
of their Schwartz kernels decays faster than any power of $h$.  For
$A,B\in\Psi^s(M)$, we say that $A=B$ microlocally on some open
$U\subset \overline T^*M$, if $\WFh(A-B)\cap U=\emptyset$. Also, we
say that $A\in\Psi^s(M)$ is compactly microlocalized, if $\WFh(A)$
does not intersect the fiber infinity $\partial(\overline T^*M)$; in
this case, $A\in\Psi^s(M)$ for all $s\in \mathbb R$.

We now recall several fundamental facts from semiclassical analysis:
%
%
\begin{prop}\label{l:elliptic} (Elliptic estimate)
Let $P\in\Psi^s(M)$, be properly supported, $A\in\Psi^t(M)$, $t\leq
s$, be compactly supported, and assume that $P$ is elliptic on
$\WFh(A)\subset \overline T^*M$.  Then there exists a compact set
$K\subset M$ and a constant $C$ such that for each $u\in
H^s_{\hbar,\loc}(M)$,
$$
\|Au\|_{L^2(M)}\leq  C\|Pu\|_{L^2(K)}+\mathcal O(h^\infty)\|u\|_{L^2(K)}.
$$
\end{prop}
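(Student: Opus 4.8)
The plan is to reduce the estimate to a microlocal elliptic parametrix construction for $P$ on $\WFh(A)$ and then to read off the bound from the mapping properties of compactly supported operators on semiclassical Sobolev spaces; this is the standard proof of the semiclassical elliptic estimate, so I only indicate the steps. Since $A$ is compactly supported, $\WFh(A)$ is a compact subset of $\overline T^*M$, and ellipticity of $P$ on $\WFh(A)$ means that $\langle\xi\rangle^{-s}\sigma(P)$, which is continuous on $\overline T^*M$, is bounded away from zero on some open neighborhood $U\subset\overline T^*M$ of $\WFh(A)$. First I would choose $B\in\Psi^0(M)$ compactly supported with $\WFh(B)\subset U$ and $B=I$ microlocally near $\WFh(A)$, that is $\WFh(I-B)\cap\WFh(A)=\emptyset$. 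Next I would construct a compactly supported $Q\in\Psi^{-s}(M)$ with $\WFh(Q)\subset U$ satisfying $QP=B$ microlocally near $\WFh(A)$: take the leading symbol $q_0=\sigma(B)/\sigma(P)$, which is well defined on $U$, extend it by a cutoff supported in $U$, and correct it order by order in powers of $h$ so that the full symbol of $QP$ agrees with that of $B$ on a neighborhood of $\WFh(A)$ modulo $S^{-\infty}$ with $\mathcal O(h^\infty)$ bounds; Borel summation of the correction series produces $Q$, and by construction $\WFh(QP-B)\cap\WFh(A)=\emptyset$.

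With $B$ and $Q$ in hand, I would write
\[
A=A(I-B)+A(QP)+A(B-QP)=(AQ)P+R,\qquad R:=A(I-B)+A(B-QP).
\]
Both summands of $R$ have empty semiclassical wavefront set, since $\WFh(A(I-B))\subset\WFh(A)\cap\WFh(I-B)=\emptyset$ and $\WFh(A(B-QP))\subset\WFh(A)\cap\WFh(QP-B)=\emptyset$; because $A$ is compactly supported, $R\in h^\infty\Psi^{-\infty}(M)$ with Schwartz kernel supported in $K_0\times K_0$ for some compact $K_0\subset M$, whence $\|Ru\|_{L^2(M)}\le\mathcal O(h^\infty)\|u\|_{L^2(K_0)}$ by the Hilbert--Schmidt bound. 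Moreover $AQ\in\Psi^{t-s}(M)$ is compactly supported with $t-s\le 0$, so it acts $H^0_{\hbar,\loc}(M)\to H^{s-t}_{\hbar,\comp}(M)\subset L^2(M)$ with norm $\mathcal O(1)$; concretely, there is a compact $K_1\subset M$ with $\|(AQ)v\|_{L^2(M)}\le C\|v\|_{L^2(K_1)}$. Since $P$ is properly supported, $Pu\in H^0_{\hbar,\loc}(M)=L^2_{\loc}(M)$ whenever $u\in H^s_{\hbar,\loc}(M)$, so $Au=(AQ)(Pu)+Ru$ is meaningful; setting $K=K_0\cup K_1$ we obtain
\[
\|Au\|_{L^2(M)}\le\|(AQ)(Pu)\|_{L^2(M)}+\|Ru\|_{L^2(M)}\le C\|Pu\|_{L^2(K)}+\mathcal O(h^\infty)\|u\|_{L^2(K)},
\]
which is the asserted inequality.

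The only genuinely technical step is the parametrix construction in the first paragraph: one must run the symbol calculus in the classical symbol classes on the fiber-radially compactified $\overline T^*M$, checking that $q_0$ and all its corrections stay in the right classes up to the fiber infinity $\partial(\overline T^*M)$ --- which is exactly the point where ellipticity of the order-$s$ operator $P$, rather than mere nonvanishing of a principal symbol on a compact subset of $T^*M$, is used --- that every operator occurring can be taken compactly supported, and that the formal correction series can be Borel-summed with an $h^\infty\Psi^{-\infty}$ remainder. All of this is routine semiclassical microlocal analysis, and in the write-up I would simply invoke it from \cite{e-z} together with \cite[Section~2]{v} and \cite[Section~2.1]{zeeman}.
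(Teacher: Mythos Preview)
Your argument is correct: this is precisely the standard microlocal parametrix construction for the semiclassical elliptic estimate, and the care you take with supports (so that $AQ$ and $R$ are compactly supported and the final bound localizes to a compact $K\subset M$) is appropriate. The paper itself does not give a proof but simply refers to \cite[Theorem~18.1.24']{ho3} for the microlocal case and \cite[Section~2.2]{zeeman} for the semiclassical version, and what you have written is exactly the argument one finds there.
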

%
%
%
%
\begin{prop}\label{l:egorov}
Assume that $M$ is a manifold with prescribed volume form and
$P(h)\in\Psi^s$ is a properly supported self-adjoint operator on
$L^2(M)$ with principal symbol $p\in C^\infty(T^*M;\mathbb R)$.
Let
$$
e^{itP(h)/h}:L^2(M)\to L^2(M),\ t\in \mathbb R,
$$
be the corresponding Schr\"odinger propagator, defined by means of
spectral theory; it is a unitary operator.  Let also $\exp(tH_p)$ be
the Hamiltonian flow of $p$ on $T^*M$. Then (with constants below
depending on $t$):

1. (Microlocalization) The operator $e^{itP(h)/h}$ is microlocalized
on the graph of $\exp(-tH_p)$ in the following sense: if
$A,B\in\Psi^0(M)$ are compactly supported and $B$ is compactly
microlocalized, and
\begin{equation}
  \label{e:mic-req}
\exp(tH_p)(\WFh(A))\cap\WFh(B)=\emptyset,
\end{equation}
then $Ae^{itP(h)/h}B=\mathcal O(h^\infty)_{L^2\to L^2}$.

2. (Egorov's Theorem) Let $A\in\Psi^0(M)$ be compactly supported and
compactly microlocalized, with principal symbol $a$.  Then there
exists a compactly supported and compactly microlocalized operator
$A_t\in\Psi^0(M)$ such that
\begin{equation}
  \label{e:egorov}
e^{itP(h)/h}Ae^{-itP(h)/h}=A_t+\mathcal O(h^\infty)_{L^2(M)\to L^2(M)}.
\end{equation}
Moreover, $\WFh(A_t)\subset\exp(-tH_p)(\WFh(A))$, and the principal
symbol of $A_t$ is $a\circ\exp(tH_p)$.
\end{prop}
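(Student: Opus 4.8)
The plan is to prove the Egorov statement (part~2) first by the standard transport-equation construction, and then deduce the microlocalization statement (part~1) from it. Fix $t$. For part~2 I would look for a smooth family $s\mapsto B(s)\in\Psi^0(M)$, $s\in[0,t]$, of compactly supported and compactly microlocalized operators with $B(0)=A$ such that
\[
\partial_s B(s)-{\textstyle\frac ih}[P(h),B(s)]\in h^\infty\Psi^{-\infty}(M)
\]
with compactly supported Schwartz kernels, uniformly in $s\in[0,t]$; the left-hand side is then $\mathcal O(h^\infty)_{L^2(M)\to L^2(M)}$. One builds $B(s)$ by asymptotic summation: writing the full symbol of $B(s)$ as $b(s)\sim\sum_{k\ge0}h^k b_k(s)$ and using that the full symbol of ${\textstyle\frac ih}[P(h),B]$ is $H_p b+\sum_{k\ge1}h^k(\cdots)$, with the $h^k$ term a fixed differential expression in $b_0,\dots,b_{k-1}$ and the symbol of $P(h)$, one solves the transport equations $\partial_s b_0=H_p b_0$, $b_0(0)=a$, giving $b_0(s)=a\circ\exp(sH_p)$, and then the inhomogeneous linear transport equations for $b_k$, $k\ge1$, with $b_k(0)$ the $h^k$-term of the full symbol of $A$, by integrating along $\exp(sH_p)$ (Duhamel). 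Since $A$ is compactly microlocalized and the Hamiltonian flow of $p=|\xi|_g^2$ is complete ($M$ being complete), each $b_k(s)$ is smooth and supported in $\exp(-sH_p)(\supp a)$, and a Borel summation in $h$ produces $B(s)$.

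Now put $V(s)=e^{-isP(h)/h}B(s)e^{isP(h)/h}$, a well-defined bounded operator since $B(s)$ is bounded and $e^{\pm isP(h)/h}$ unitary. Differentiating (legitimate since $[P(h),B(s)]$ extends to a bounded operator and $B(s)$ maps into $\mathrm{Dom}(P(h))$) gives $\partial_s V(s)=e^{-isP(h)/h}(\partial_s B(s)-{\textstyle\frac ih}[P(h),B(s)])e^{isP(h)/h}$, so by unitarity and the displayed bound, integrating over $s\in[0,t]$,
\[
\|V(t)-V(0)\|_{L^2(M)\to L^2(M)}=\mathcal O(h^\infty).
\]
Since $V(0)=A$ and $V(t)=e^{-itP(h)/h}B(t)e^{itP(h)/h}$, conjugating back yields $B(t)=e^{itP(h)/h}Ae^{-itP(h)/h}+\mathcal O(h^\infty)_{L^2\to L^2}$, so one takes $A_t=B(t)$. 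Its principal symbol is $b_0(t)=a\circ\exp(tH_p)$ and its full symbol is supported in $\exp(-tH_p)(\supp a)$; replacing $A$ beforehand by an element of $\Psi^0(M)$ whose full symbol is supported in an arbitrarily small neighborhood of $\WFh(A)$ (which changes $e^{itP(h)/h}Ae^{-itP(h)/h}$ only by $\mathcal O(h^\infty)$) and shrinking the neighborhood gives $\WFh(A_t)\subset\exp(-tH_p)(\WFh(A))$.

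For part~1, apply part~2 to $B$ (compactly supported and compactly microlocalized) to obtain a compactly supported, compactly microlocalized $B_t\in\Psi^0(M)$ with $e^{itP(h)/h}Be^{-itP(h)/h}=B_t+\mathcal O(h^\infty)_{L^2\to L^2}$ and $\WFh(B_t)\subset\exp(-tH_p)(\WFh(B))$. Then
\[
Ae^{itP(h)/h}B=A\bigl(e^{itP(h)/h}Be^{-itP(h)/h}\bigr)e^{itP(h)/h}=AB_te^{itP(h)/h}+\mathcal O(h^\infty)_{L^2\to L^2},
\]
where unitarity of $e^{itP(h)/h}$ absorbs the error. The composition $AB_t$ is a compactly supported element of $\Psi^0(M)$ with $\WFh(AB_t)\subset\WFh(A)\cap\exp(-tH_p)(\WFh(B))$; applying $\exp(-tH_p)$ to the hypothesis $\exp(tH_p)(\WFh(A))\cap\WFh(B)=\emptyset$ shows this set is empty, so $AB_t\in h^\infty\Psi^{-\infty}(M)$ has compactly supported kernel and is thus $\mathcal O(h^\infty)_{L^2\to L^2}$; therefore $Ae^{itP(h)/h}B=\mathcal O(h^\infty)_{L^2\to L^2}$.

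The \emph{main obstacle} is the construction of $B(s)$ in part~2: one must carry out the asymptotic summation so that $B(s)$ stays in $\Psi^0(M)$ uniformly in $s\in[0,t]$, remains compactly microlocalized (so that, although $P(h)$ has order $2$, the commutator ${\textstyle\frac ih}[P(h),B(s)]$ is again a compactly microlocalized, hence effectively order-$0$, operator), and carries the stated support; closely related is justifying the differentiation of $V(s)$ and the Duhamel identity at the operator level, since $P(h)$ is unbounded. The geometry of $M$ enters only through completeness of the geodesic flow, which prevents the Hamiltonian flow of $p$ from escaping to infinity in finite time; with that in hand all constants inevitably depend on $t$, as the statement allows. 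The remaining ingredients --- the symbol-calculus formula for the full symbol of ${\textstyle\frac ih}[P(h),B]$, and the fact that a compactly supported element of $h^\infty\Psi^{-\infty}(M)$ is $\mathcal O(h^\infty)_{L^2\to L^2}$ --- are routine.
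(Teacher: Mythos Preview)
The paper does not give its own proof of this proposition; it simply refers to \cite[Theorem~11.1]{e-z} and \cite[Proposition~2.6]{zeeman} as standard references. Your proposal is correct and is essentially the textbook argument found there: construct $B(s)$ by solving the hierarchy of transport equations for the full symbol along $\exp(sH_p)$, control the error via the Duhamel identity for $V(s)=e^{-isP/h}B(s)e^{isP/h}$, and then deduce part~1 from part~2 by writing $Ae^{itP/h}B=AB_t\,e^{itP/h}+\mathcal O(h^\infty)$ and using $\WFh(AB_t)=\emptyset$.

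One small remark: you invoke completeness of the Hamiltonian flow by writing ``$p=|\xi|_g^2$'' and ``$M$ complete'', which is the case relevant to the paper but not literally what the proposition assumes (general real $p$). In the generality stated, completeness of $\exp(tH_p)$ on the support of the symbols involved is implicit in the very formulation of~\eqref{e:mic-req} and of the conclusion; your argument goes through under that implicit hypothesis without change.
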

%
%
A proof of Proposition~\ref{l:elliptic} in the closely related microlocal
case is given in~\cite[Theorem~18.1.24']{ho3}; see for example~\cite[Section~2.2]{zeeman}
for the semiclassical case. For Proposition~\ref{l:egorov},
see~\cite[Theorem~11.1]{e-z} or~\cite[Proposition~2.6]{zeeman}.

\section{Proofs}\label{s:proofs}

We start by studying the equation~\eqref{e:main} in some cusp
$\mathcal C_j$.  Consider the Fourier series
\begin{equation}
  \label{e:fourier}
u|_{\mathcal C_j}(r,\theta)=\sum_{k\in \mathbb Z} u_k^{j}(r)e^{ik\theta}.
\end{equation}
By~\eqref{e:cusp-metric}, \eqref{e:main} takes the form
\begin{equation}
  \label{e:separated}
[(D_r+i/2)^2+k^2 e^{2r}-\lambda^2]u_k^{j}(r)=0,\
k\in \mathbb Z.
\end{equation}
For $k=0$, \eqref{e:separated} is a constant-coefficient ODE and we have
\begin{equation}
  \label{e:u0}
u_0^{j}(r)=u_+^{j}e^{(1/2+i\lambda)r}+u_-^{j}e^{(1/2-i\lambda)r},
\end{equation}
for some constants $u_\pm^j$.

Now, we extend the function $r$ smoothly from the union of all
$\mathcal C_j$ to the whole $M$ so that $r\leq R$ outside of the cusp
regions.  As before, let $1_{\mathcal C_j}$ be the indicator function
of the cusp $\mathcal C_j$.  Finally, fix a cutoff function
$\widetilde\chi\in C^\infty(\mathbb R;[0,1])$ such that
$\supp\widetilde\chi\subset (0,\infty)$ and
$\supp(1-\widetilde\chi)\subset (-\infty, 1)$.

Take a cusp $\mathcal C_j$ and define the `incoming' part of the Eisenstein function by
\begin{equation}
  \label{e:e-prime-j}
E'_j(\lambda)=1_{\mathcal C_j}\cdot\widetilde\chi(r-R)e^{(1/2-i\lambda)r}\in C^\infty(M).
\end{equation}
Then
$$
F_j(\lambda)=(\Delta-1/4-\lambda^2)E'_j(\lambda)
=1_{\mathcal C_j}[\Delta,\widetilde\chi(r-R)]e^{(1/2-i\lambda)r}\in C_0^\infty(M).
$$
Assume that for some constant $C_0$, we have
\begin{equation}
  \label{e:lambda-cond}
C_0^{-1}\leq \Imag\lambda\leq C_0,\
\Real\lambda>1.
\end{equation}
Since $\Delta$ is self-adjoint, the resolvent
$$
(\Delta-1/4-\lambda^2)^{-1}:L^2(M)\to L^2(M)
$$
is well-defined and the only solution to~\eqref{e:main}
satisfying~\eqref{e:eisenstein-bd} is given by~\cite[Section~3]{mu}
\begin{equation}
  \label{e:eisenstein-defi}
\begin{gathered}
E_j(\lambda)=E'_j(\lambda)+E''_j(\lambda),\\
E''_j(\lambda)=-(\Delta-1/4-\lambda^2)^{-1}F_j(\lambda).
\end{gathered}
\end{equation}
We can estimate $E''_j$ uniformly in $L^2(M)$:
%
%
\begin{prop}\label{l:bb}
There exists a constant $C$ such that for each $\lambda$ satisfying~\eqref{e:lambda-cond},
\begin{equation}
  \label{e:epp-est}
\|E''_j(\lambda)\|_{L^2(M)}\leq C.
\end{equation}
It follows that for each compact $K\subset M$, there exists a constant
$C_K$ such that for each $\lambda$ satisfying~\eqref{e:lambda-cond},
\begin{equation}
  \label{e:epp-est-2}
\|E_j(\lambda)\|_{L^2(K)}\leq C_K.
\end{equation}
\end{prop}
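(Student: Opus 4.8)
The plan is to bound $E''_j(\lambda)=-(\Delta-1/4-\lambda^2)^{-1}F_j(\lambda)$ by combining a crude resolvent bound from the spectral theorem with an estimate on $\|F_j(\lambda)\|_{L^2}$, carefully tracking the dependence on $\lambda$. The point is that $F_j$ grows linearly in $|\lambda|$ while the resolvent decays like $|\lambda|^{-1}$, and the two factors exactly balance to give an $\mathcal O(1)$ bound.

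First I would estimate $F_j(\lambda)=1_{\mathcal C_j}[\Delta,\widetilde\chi(r-R)]e^{(1/2-i\lambda)r}$. Since $\widetilde\chi(r-R)$ is locally constant outside $\{R<r<R+1\}$, the commutator $[\Delta,\widetilde\chi(r-R)]$ is supported in the fixed compact set $\mathcal C_j\cap\{R\leq r\leq R+1\}$. On the cusp, $\Delta=-\partial_r^2+\partial_r-e^{2r}\partial_\theta^2$ by~\eqref{e:cusp-metric}, so applying this commutator to the $\theta$-independent function $e^{(1/2-i\lambda)r}$ produces a smooth function supported in that compact set whose pointwise size is $\mathcal O(|\lambda|)$, uniformly in $\lambda$ satisfying~\eqref{e:lambda-cond} (one $r$-derivative can fall on the exponential, producing a factor $1/2-i\lambda$, while $|e^{(1/2-i\lambda)r}|=e^{(1/2+\Imag\lambda)r}\leq e^{(1/2+C_0)(R+1)}$ on the relevant range of $r$). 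Integrating $|F_j|^2$ against the volume form over this fixed compact set then gives $\|F_j(\lambda)\|_{L^2(M)}\leq C|\lambda|$.

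Next comes the resolvent bound. Since $\Delta$ is self-adjoint and nonnegative, for $z\notin[0,\infty)$ one has $\|(\Delta-z)^{-1}\|_{L^2\to L^2}\leq(\operatorname{dist}(z,[0,\infty)))^{-1}\leq|\Imag z|^{-1}$. Taking $z=1/4+\lambda^2$, we compute $\Imag z=2(\Real\lambda)(\Imag\lambda)\geq 2C_0^{-1}\Real\lambda>0$ by~\eqref{e:lambda-cond}, whence $\|(\Delta-1/4-\lambda^2)^{-1}\|_{L^2\to L^2}\leq C_0/(2\Real\lambda)\leq C/|\lambda|$, using that $\Imag\lambda$ is bounded so that $|\lambda|\asymp\Real\lambda$. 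Composing with the previous step yields $\|E''_j(\lambda)\|_{L^2(M)}\leq(C/|\lambda|)\cdot(C|\lambda|)=C$, which is~\eqref{e:epp-est}. For~\eqref{e:epp-est-2}, write $E_j=E'_j+E''_j$: on a compact set $K$ the function $r$ is bounded by some $R_K$, so $|E'_j(\lambda)|=\widetilde\chi(r-R)e^{(1/2+\Imag\lambda)r}\leq e^{(1/2+C_0)R_K}$ on $K$, giving $\|E'_j(\lambda)\|_{L^2(K)}\leq e^{(1/2+C_0)R_K}\Vol(K)^{1/2}$, and adding~\eqref{e:epp-est} gives the claim with $C_K$ depending only on $K$ and $C_0$.

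There is no serious obstacle here beyond bookkeeping of the $\lambda$-dependence; the only thing to be careful about is that $F_j$ is genuinely of size $|\lambda|$, coming from the first-order part of the commutator acting on $e^{(1/2-i\lambda)r}$ — although even the (false) estimate $\|F_j\|=\mathcal O(1)$ would still close the argument, producing the stronger conclusion $\|E''_j\|=\mathcal O(|\lambda|^{-1})$.
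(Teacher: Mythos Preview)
Your proof is correct and follows exactly the same approach as the paper's: bound $\|F_j\|_{L^2}=\mathcal O(|\lambda|)$ using that the commutator has fixed compact support, bound the resolvent by $|\Imag(\lambda^2)|^{-1}=\mathcal O(|\lambda|^{-1})$ via self-adjointness of $\Delta$, and combine. You have simply supplied more detail than the paper's two-line argument (including the explicit form of $\Delta$ on the cusp and the size of $E'_j$ on compacta), but the method is identical.
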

\begin{proof}
It follows from the definition of $F_j$ that $\|F_j\|_{L^2}=\mathcal O(|\lambda|)$.
However, since $\Delta$ is self-adjoint,
$$
\|(\Delta-1/4-\lambda^2)^{-1}\|_{L^2\to L^2}\leq {1\over|\Imag(\lambda^2)|}=\mathcal O(|\lambda|^{-1})
$$
and~\eqref{e:epp-est} follows. Next, \eqref{e:epp-est-2} follows
from~\eqref{e:epp-est} and the fact that $\|E'_j\|_{L^2(K)}\leq C_K$.
\end{proof}
%
%
Also, $E_j$ is microlocalized on the cosphere bundle $S^*M$:
%
%
\begin{prop}\label{l:ell-e}
Assume that $A(h)\in\Psi^s(M)$, $s\leq 2$, is compactly supported and
$\WFh(A)\cap S^*M=\emptyset$. Let $h=(\Real\lambda)^{-1}$. Then
$\|A(h)E_j(\lambda)\|_{L^2(M)}=\mathcal O(h^\infty)$.
\end{prop}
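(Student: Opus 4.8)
The plan is to deduce this directly from the elliptic estimate, Proposition~\ref{l:elliptic}, applied to the semiclassical form of the defining equation~\eqref{e:main}, together with the uniform local bound of Proposition~\ref{l:bb}. First I would rewrite the equation: for $\lambda$ satisfying~\eqref{e:lambda-cond} and $h=(\Real\lambda)^{-1}$, multiplying~\eqref{e:main} by $h^2$ and using $h\lambda=1+ih\Imag\lambda$ gives exactly~\eqref{e:main-semi}, i.e.
$$
P(h)E_j(\lambda)=0,\qquad P(h)=h^2\Delta-h^2/4-(1+ih\Imag\lambda)^2.
$$
Here $P(h)$ is a semiclassical differential operator of order $2$, hence properly supported (it is $\Delta$ plus multiplication by constants), and its semiclassical principal symbol is $p-1$, where $p(z,\zeta)=|\zeta|_g^2$ is the symbol of $h^2\Delta$: the term $h^2/4$ is lower order, and since $h\Imag\lambda\to 0$ under~\eqref{e:lambda-cond} we have $(1+ih\Imag\lambda)^2=1+\mathcal O(h)$. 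Consequently the characteristic set of $P(h)$, computed in the fiber-radial compactification $\overline T^*M$, is $\{p=1\}=S^*M$: on the finite part of $\overline T^*M$ this is immediate, and on the fiber infinity $\partial(\overline T^*M)$ the rescaled symbol $\langle\zeta\rangle^{-2}\sigma(P(h))$ restricts to the leading part of $\langle\zeta\rangle^{-2}p$, which is the (strictly positive) metric quadratic form in the fiber direction, so $P(h)$ is elliptic there.

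Next I would invoke the hypothesis $\WFh(A)\cap S^*M=\emptyset$: together with the previous paragraph this says precisely that $P(h)$ is elliptic on $\WFh(A)\subset\overline T^*M$. Proposition~\ref{l:elliptic}, applied with $P(h)\in\Psi^2(M)$ in the role of the elliptic operator and $A(h)\in\Psi^s(M)$, $s\le 2$, in the role of the test operator, then yields a compact set $K\subset M$ and a constant $C$ with
$$
\|A(h)E_j(\lambda)\|_{L^2(M)}\le C\|P(h)E_j(\lambda)\|_{L^2(K)}+\mathcal O(h^\infty)\|E_j(\lambda)\|_{L^2(K)},
$$
which applies since $E_j(\lambda)\in C^\infty(M)$. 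The first term on the right vanishes because $P(h)E_j(\lambda)=0$, and the second is $\mathcal O(h^\infty)$ by the uniform bound $\|E_j(\lambda)\|_{L^2(K)}\le C_K$ from~\eqref{e:epp-est-2}. Hence $\|A(h)E_j(\lambda)\|_{L^2(M)}=\mathcal O(h^\infty)$, as claimed.

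I expect no serious obstacle: the argument is a packaging of the elliptic estimate, using only the equation~\eqref{e:main} and Proposition~\ref{l:bb}, and requires no information about the geodesic flow. The one point that needs genuine care is ellipticity of $P(h)$ not merely over a compact subset of $T^*M$ but all the way up to the fiber boundary $\partial(\overline T^*M)$, so that Proposition~\ref{l:elliptic} can be applied to an operator $A$ of order as large as $2$ with $\WFh(A)$ possibly meeting fiber infinity; this is exactly the place where the fiber-radial compactification is used, and it follows at once from the ellipticity of $\Delta$. A secondary bookkeeping point is checking that the constant term $(1+ih\Imag\lambda)^2$ contributes only below principal order, which is where the boundedness of $\Imag\lambda$ in~\eqref{e:lambda-cond} enters (in fact $\Imag\lambda=o(\Real\lambda)$ would suffice).
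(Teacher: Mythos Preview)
Your proposal is correct and follows exactly the same approach as the paper's one-line proof, which simply says ``Follows from Proposition~\ref{l:elliptic} applied to~\eqref{e:main-semi} and~\eqref{e:epp-est-2}.'' You have filled in the details the paper leaves implicit: the identification of the principal symbol of $P(h)$, the verification that its characteristic set is $S^*M$ (including at fiber infinity), and the bookkeeping on the lower-order term $(1+ih\Imag\lambda)^2$.
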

\begin{proof}
Follows from Proposition~\ref{l:elliptic} applied
to~\eqref{e:main-semi} and~\eqref{e:epp-est-2}.
\end{proof}
%
%
We now prove the key technical estimate, approximating $E_j(\lambda)$
on a fixed compact set by the result of propagating the appropriately
cut off `incoming wave' $E_j'(\lambda)$:
%
%
\begin{prop}\label{l:key}
Assume that $\lambda$ satisfies~\eqref{e:lambda-cond}. As before,
let $h=(\Real\lambda)^{-1}$. Then for each $T>R$, $t>0$,
$$
\begin{gathered}
\|\widetilde\chi(T-r)(E_j(\lambda)-e^{ith(1/4+\lambda^2)}
e^{-ith\Delta}\widetilde\chi(T+2t+1-r)E'_j(\lambda))\|_{L^2(M)}\\
=\mathcal O_{t,T}(h^\infty)+\mathcal O(e^{-2t\Imag\lambda}).
\end{gathered}
$$
Here the $\mathcal O_{t,T}$ notation means that the constants in $\mathcal O(\cdot)$
depend on $t$ and $T$. The constant in $\mathcal O(e^{-2t\Imag\lambda})$ is
independent of $T,t,h$. Note also that $\widetilde\chi(T-r)\in
C_0^\infty(M)$.
\end{prop}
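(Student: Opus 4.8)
The plan is to realize the heuristic computation from the introduction rigorously by inserting $t$-dependent spatial cutoffs, using the finite speed of propagation of $e^{-ith\Delta}$ in a disguised form. Write $E_j = E'_j + E''_j$ as in~\eqref{e:eisenstein-defi}. Since $E''_j \in L^2(M)$ uniformly by Proposition~\ref{l:bb} and $(\Delta - 1/4 - \lambda^2)E''_j = -F_j$, one has $E''_j = e^{ith(1/4+\lambda^2)}e^{-ith\Delta}E''_j + (\text{error from }F_j)$; more precisely, Duhamel's formula gives
$$
e^{-ith\Delta}E''_j = e^{-ith(1/4+\lambda^2)}E''_j + i h\int_0^t e^{-ish(1/4+\lambda^2)}e^{-i(t-s)h\Delta}F_j\,ds,
$$
but because $|e^{ith(1/4+\lambda^2)}| = e^{-2t\Imag\lambda}$ and $e^{-ith\Delta}$ is unitary, the cleanest route is simply to bound the full contribution of $E''_j$ by its $L^2$ norm times $e^{-2t\Imag\lambda}$. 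So the first step is to show $\|\widetilde\chi(T-r)\,e^{ith(1/4+\lambda^2)}e^{-ith\Delta}\,\widetilde\chi(T+2t+1-r)E''_j\|_{L^2}$ together with $\|\widetilde\chi(T-r)E''_j\|_{L^2}$ — viewed as the two "$E''$ halves" of the difference — can be combined so that the $E''_j$ part of the claimed difference is $\mathcal O(e^{-2t\Imag\lambda})$ with a constant independent of $T,t,h$. For the $\widetilde\chi(T-r)E''_j$ term this is immediate from~\eqref{e:epp-est}; for the propagated term one uses unitarity of $e^{-ith\Delta}$ and the scalar factor.

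The harder part is the $E'_j$ piece, where one must show
$$
\widetilde\chi(T-r)\bigl(E'_j - e^{ith(1/4+\lambda^2)}e^{-ith\Delta}\widetilde\chi(T+2t+1-r)E'_j\bigr) = \mathcal O_{t,T}(h^\infty) + \mathcal O(e^{-2t\Imag\lambda}).
$$
Here I would argue as follows. Let $w(s) = e^{-ish\Delta}\widetilde\chi(T+2t+1-r)E'_j$ for $s\in[0,t]$; this solves the semiclassical Schr\"odinger equation with initial data $\widetilde\chi(T+2t+1-r)E'_j$. On the other hand $E'_j$ itself satisfies $(\Delta - 1/4 - \lambda^2)E'_j = F_j$ with $F_j \in C_0^\infty$ supported in $\{R < r < R+1\}$, so $e^{ish(1/4+\lambda^2)}e^{-ish\Delta}(\widetilde\chi(T+2t+1-r)E'_j)$ would equal $E'_j$ exactly if $\widetilde\chi(T+2t+1-r)E'_j$ were $E'_j$ and there were no $F_j$. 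The error has two sources: (a) the commutator term $[\Delta, \widetilde\chi(T+2t+1-r)]E'_j$, supported near $r = T+2t+1 - O(1)$, and (b) the forcing $F_j$, supported near $r = R$. Both sources are then propagated for time $\le t$; by the finite-speed-of-propagation bound mentioned after~\eqref{e:funny} (the wavefront set of $e^{ish\Delta}A e^{-ish\Delta}$ on $S^*M$ lies in $\{r \le 2s + T\}$), any disturbance starting at $r = T + 2t + 1 - O(1)$ cannot reach $\{r \le T\}$ within time $t$ — hence its contribution to $\widetilde\chi(T-r)(\cdots)$ is $\mathcal O_{t,T}(h^\infty)$ up to the errors caused by the $L^2$-tails weighted by $e^{(1/2-i\lambda)r}$, which is where the $\mathcal O(e^{-2t\Imag\lambda})$ comes in. Concretely: the truncation $E'_j - \widetilde\chi(T+2t+1-r)E'_j$ is supported in $\{r \ge T+2t\}$, where $|e^{(1/2-i\lambda)r}|\le e^{(1/2+\Imag\lambda)\cdot}$... careful — one uses $|e^{(1/2-i\lambda)r}| = e^{(1/2 + \Imag\lambda)r}$, which grows, so one instead measures in the weighted space and accounts for the $e^{2t\Imag\lambda}$ overall amplification by the scalar $e^{ith(1/4+\lambda^2)}$, leaving the stated $e^{-2t\Imag\lambda}$.

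**Main obstacle.**
The technical heart is the bookkeeping of the weights: the incoming wave $e^{(1/2-i\lambda)r}$ is \emph{not} in $L^2$ and in fact grows like $e^{(1/2+\Imag\lambda)r}$, so one cannot simply apply $L^2$ unitarity to $E'_j$ directly. The device is to note that on the support of $\widetilde\chi(T-r)$ everything has bounded $r$, that the propagator only moves mass outward at speed $2$ (in $r$) over the relevant set $S^*M$, and that the prefactor $e^{ith(1/4+\lambda^2)}$ contributes exactly the decay $e^{-2t\Imag\lambda}$ — so the scheme closes if the cutoff window $\widetilde\chi(T+2t+1-r)$ is wide enough (width $2t+1$) that propagation for time $t$ cannot exit it. I would set up an a priori energy estimate for $w(s)$ localized to $\{r \le T + 2(t-s) + 1\}$, using a positive commutator / Gr\"onwall argument with the vector field $\partial_s + 2\partial_r$-type weight, to control the leakage; combined with Proposition~\ref{l:egorov}(1) for the $h^\infty$ part. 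The remaining pieces — the $F_j$ forcing term and the $E''_j$ term — are then routine given Proposition~\ref{l:bb} and Duhamel.
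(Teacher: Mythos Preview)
Your decomposition has a genuine gap. By splitting $E_j=E'_j+E''_j$ on the \emph{left} side of the difference, you create the term $\widetilde\chi(T-r)E''_j$, which Proposition~\ref{l:bb} bounds only by a constant independent of $t$ and $h$ --- not by $\mathcal O(e^{-2t\Imag\lambda})$. Likewise, your Duhamel analysis of the remaining $E'_j$ piece picks up the forcing $(\Delta-1/4-\lambda^2)E'_j=F_j$, supported near $r=R$; your propagation-speed argument cannot dispose of it (it starts well inside $\supp\widetilde\chi(T-r)$), and a direct $L^2$ bound on $h\int_0^t e^{ish(1/4+\lambda^2)}e^{-ish\Delta}F_j\,ds$ gives only $\mathcal O(1)$ since $h\|F_j\|_{L^2}=\mathcal O(1)$. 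These two $\mathcal O(1)$ errors in fact cancel exactly, because $(\Delta-1/4-\lambda^2)E_j=0$, but your scheme treats them separately and never exhibits the cancellation; neither piece individually satisfies the stated bound.

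The paper's remedy is to decompose on the \emph{right} instead: add and subtract $e^{ith(1/4+\lambda^2)}e^{-ith\Delta}\widetilde\chi(T+2t+1-r)E''_j$, which is $\mathcal O(e^{-2t\Imag\lambda})$ by unitarity and the scalar factor. This reduces the claim to $\|u_t\|_{L^2}=\mathcal O_{t,T}(h^\infty)$ for
\[
u_s=\widetilde\chi(T-r)\bigl(E_j-e^{ish(1/4+\lambda^2)}e^{-ish\Delta}\widetilde\chi(T+2t+1-r)E_j\bigr),
\]
with the \emph{full} $E_j$. Now $u_0=0$ and, crucially, $D_s u_s$ contains only the commutator $h[\Delta,\widetilde\chi(T+2t+1-r)]E_j$ with no $F_j$ term at all. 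From here your propagation-speed idea is correct and is exactly what the paper does: one uses Proposition~\ref{l:ell-e} to insert a compactly microlocalized cutoff near $S^*M$, and then Proposition~\ref{l:egorov}(1) shows that the commutator, supported in $\{r>T+2t\}$, cannot be propagated into $\supp\widetilde\chi(T-r)$ in Hamiltonian time $s\le t$.
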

\begin{proof}
First of all, we have by~\eqref{e:epp-est},
$$
\|e^{ith(1/4+\lambda^2)}e^{-ith\Delta}\widetilde\chi(T+2t+1-r)E''_j(\lambda)\|_{L^2}
\leq e^{-2t\Imag\lambda}\|E''_j(\lambda)\|_{L^2}=\mathcal O(e^{-2t\Imag\lambda}).
$$
Therefore, it suffices to prove that
\begin{equation}
  \label{e:key-int}
\|u_t\|_{L^2}=\mathcal O_{t,T}(h^\infty),
\end{equation}
where for $0\leq s\leq t$, we define
$$
u_s=\widetilde\chi(T-r)(E_j(\lambda)-e^{ish(1/4+\lambda^2)}
e^{-ish\Delta}\widetilde\chi(T+2t+1-r)E_j(\lambda)).
$$
Since $\widetilde\chi(T-r)\widetilde\chi(T+2t+1-r)=\widetilde\chi(T-r)$,
we have $u_0=0$; next,
$$
\begin{gathered}
D_s u_s=\widetilde\chi(T-r)e^{ish(1/4+\lambda^2)}e^{-ish\Delta}
h(\Delta-1/4-\lambda^2)\widetilde\chi(T+2t+1-r)E_j(\lambda)\\
=\widetilde\chi(T-r)e^{ish(1/4+\lambda^2)}e^{-ish\Delta}
h[\Delta,\widetilde\chi(T+2t+1-r)]E_j(\lambda).
\end{gathered}
$$
Let $X(h)\in\Psi^0$ be compactly supported and compactly
microlocalized in a small neighborhood of the cosphere bundle $S^*M$,
but equal to the identity microlocally near $\{r\leq T+2t+1\}\cap
S^*M$. Then by Proposition~\ref{l:ell-e}
\begin{equation}
  \label{e:key-1}
\|h[\Delta,\widetilde\chi(T+2t+1-r)](1-X(h))E_j(\lambda)\|_{L^2}=\mathcal O_{t,T}(h^\infty).
\end{equation}
Now, by part~1 of Proposition~\ref{l:egorov},
\begin{equation}
  \label{e:key-3}
\|\widetilde\chi(T-r)e^{-ish\Delta}h[\Delta,\widetilde\chi(T+2t+1-r)]X(h)\|_{L^2\to L^2}=\mathcal O_{t,T}(h^\infty).
\end{equation}
To verify~\eqref{e:mic-req}, we note that each point
$\rho\in\WFh(h[\Delta,\widetilde\chi(T+2t+1-r)]X(h))$ lies close to
the cosphere bundle $S^*M$ (depending on the choice of $X(h)$) and
inside $\{r>T+2t\}$; therefore, by~\eqref{e:ham-flow}, the curve
$\exp([0,t]H_p)\rho$ lies in $\mathcal C_j\cap \{r>T\}$, and thus does
not intersect the support of $\widetilde\chi(T-r)$. Here we use the
fact that in each cusp region $\mathcal C_j$, the derivative of the
function $r$ along the geodesic flow is bounded by 1, which can be
verified directly using~\eqref{e:cusp-metric}.

Since the operator in~\eqref{e:key-3} is compactly supported, by~\eqref{e:epp-est-2} we get
\begin{equation}
  \label{e:key-2}
\|\widetilde\chi(T-r)e^{-ish\Delta}h[\Delta,\widetilde\chi(T+2t+1-r)]X(h)E_j(\lambda)\|_{L^2}=\mathcal O_{t,T}(h^\infty).
\end{equation}
Combining~\eqref{e:key-1} with~\eqref{e:key-2}, we arrive to
$$
\|d_su_s\|_{L^2}=\mathcal O_{t,T}(h^\infty);
$$
integrating this from $0$ to $t$, we get~\eqref{e:key-int}.
\end{proof}
%
%
Armed with Proposition~\ref{l:key}, we can make rigorous the `proof' of
the main theorem given in the introduction: 
%
%
\begin{proof}[Proof of Theorem~\ref{t:main}]
Let $A\in\Psi^0$ be compactly supported with principal symbol $a$; it
suffices to prove~\eqref{e:main-1} and~\eqref{e:main-2} without the
cutoff $\chi$. We may assume that $A$ is compactly microlocalized;
indeed, if $\WFh(A)\cap S^*M=\emptyset$, then $\langle
AE_j(\lambda),E_{j'}(\lambda)\rangle=\mathcal O(h^\infty)$ by
Proposition~\ref{l:ell-e} and~\eqref{e:epp-est-2}. In fact, we may
assume that $\WFh(A)$ is contained in a small neighborhood of $S^*M$.

Fix $T>R$ such that $A$ is supported in $\{r<T-1\}$, so that
$A=\widetilde\chi(T-r)A\widetilde\chi(T-r)$.  By
Proposition~\ref{l:key},
\begin{equation}
  \label{e:est-1}
\begin{gathered}
\widetilde\chi(T-r)E_j(\lambda)=e^{ith(1/4+\lambda^2)}\widetilde\chi(T-r)
e^{-ith\Delta}\widetilde\chi(T+2t+1-r)E'_j(\lambda)\\
+\mathcal O_{t}(h^\infty)_{L^2}+\mathcal O(e^{-2t\Imag\lambda})_{L^2}.
\end{gathered}
\end{equation}
Take some $j,j'$; substituting~\eqref{e:est-1} into the expression
$$
\langle AE_j(\lambda),E_{j'}(\lambda)\rangle
=\langle A\widetilde\chi(T-r)E_j(\lambda),\widetilde\chi(T-r)E_{j'}(\lambda)\rangle,
$$
and using the fact that the left-hand side of~\eqref{e:est-1} is $\mathcal O(1)$
in $L^2$ by~\eqref{e:epp-est-2}, we get
\begin{equation}
  \label{e:conjj}
\begin{gathered}
|\langle AE_j(\lambda),E_{j'}(\lambda)\rangle
-e^{-4t\Imag\lambda}\langle \widetilde A_t E'_j(\lambda),E'_{j'}(\lambda)\rangle|
\leq Ce^{-\nu t}+\mathcal O_t(h^\infty),\\
\widetilde A_t=\widetilde\chi(T+2t+1-r)e^{ith\Delta}Ae^{-ith\Delta}\widetilde\chi(T+2t+1-r),
\end{gathered}
\end{equation}
where we use that $\Imag\lambda>\nu/2$ for $h$ small enough.
Here $C$ is a constant depending on $A$ and $T$, but not on $t$ or $h$.
Therefore,
$$
\lim_{t\to +\infty} \limsup_{h\to 0}|\langle AE_j(\lambda),E_{j'}(\lambda)\rangle
-e^{-4t\Imag\lambda}\langle\widetilde A_t E'_j(\lambda),E'_{j'}(\lambda)\rangle|=0,
$$
and thus
\begin{equation}
  \label{e:conj}
\begin{gathered}
\lim_{h\to 0}\langle AE_j(\lambda),E_{j'}(\lambda)\rangle
=\lim_{t\to +\infty}e^{-4t\Imag\lambda}\lim_{h\to 0}\langle
\widetilde A_tE'_j(\lambda),E'_{j'}(\lambda)\rangle,
\end{gathered}
\end{equation}
provided that the double limit on the right-hand side exists.
To compute this limit, let $A_t\in\Psi^0$ be the compactly supported operator from
part~2 of Proposition~\ref{l:egorov}, with $P(h)=h^2\Delta$.  Then
$\WFh(A_t)\subset \{r\leq T+2t\}$; therefore,
$$
A_t=\widetilde\chi(T+2t+1-r)A_t\widetilde\chi(T+2t+1-r)+\mathcal O_t(h^\infty)_{L^2\to L^2};
$$
by~\eqref{e:egorov}, $\widetilde A_t= A_t+\mathcal O_t(h^\infty)_{L^2\to
L^2}$. Since both $A_t$ and $\widetilde A_t$ are compactly supported,
we can replace $\widetilde A_t$ by $A_t$ in~\eqref{e:conj}.
By~\eqref{e:ham-flow}, the principal symbol of $A_t$ on $S^*M$ is
$a_t=a\circ\exp(2tV)$.

For $j\neq j'$, the supports of the functions $E'_j(\lambda)$ and
$E'_{j'}(\lambda)$ do not intersect (as they lie in different cusp
regions); since $A_t$ is pseudodifferential and compactly supported,
we get
$$
\langle A_tE'_j(\lambda),E'_{j'}(\lambda)\rangle=\mathcal O_{t}(h^\infty)
$$
and~\eqref{e:main-2} follows.

We now assume that $j=j'$. We can use the
definition~\eqref{e:e-prime-j} of $E'_j$, the definition of
semiclassical quantization, and the method of stationary phase to get
for each $t$,
\begin{equation}\label{e:lmul}
A_tE'_j(\lambda)=1_{\mathcal C_j}(a\circ\exp(2tV))(r,\theta,-1,0)
\widetilde\chi(r-R)e^{(1/2+\Imag\lambda)r}e^{-ir/h}+\mathcal O_t(h)_{L^2_{\comp}}.
\end{equation}
Indeed, restricting to the cusp $\mathcal C_j$, in local coordinates
$(r,\theta)$ (we will also need to restrict to a topologically trivial subset
of the circle, where $\theta\in \mathbb R$ gives a valid coordinate)
the left-hand side of~\eqref{e:lmul} becomes
$$
(2\pi h)^{-2}\int e^{{i\over h}((r-r',\theta-\theta')\cdot (p_r,p_\theta)-r')}
a_t(r,\theta;p_r,p_\theta)\widetilde \chi(r'-R)e^{(1/2+\Imag\lambda)r'}\,dp_r
dp_\theta dr'd\theta'+\mathcal O(h).
$$
The stationary points of the phase $\Phi=(r-r',\theta-\theta')\cdot (p_r,p_\theta)-r'$
are given by $r'=r,\theta'=\theta,p_r=-1,p_\theta=0$ and at these points,
$\Phi$ takes the value $-r$ and its Hessian has determinant $1$ and signature $0$.
Applying the method of stationary phase, we get~\eqref{e:lmul}.

We now multiply~\eqref{e:lmul} by $E'_j(\lambda)$ and integrate, remembering
that the volume form on $\mathcal C_j$ is given by $e^{-r}\,drd\theta$
and $\Imag\lambda\to \nu$ as $h\to 0$:
\begin{equation}
  \label{e:est-2}
\lim_{h\to 0}\langle A_tE'_j(\lambda),E'_j(\lambda)\rangle
=\int_{\mathcal C_j}\widetilde\chi(r-R)^2 e^{2\nu r}(a\circ\exp(2tV))(r,\theta,-1,0)\,drd\theta.
\end{equation}
Here $(r,\theta,p_r,p_\theta)$ are the coordinates on $T^* \mathcal
C_j$ induced by the coordinate system $(r,\theta)$ on $\mathcal C_j$.
Letting $t\to +\infty$ and recalling~\eqref{e:mu-j-nu}, we get
from~\eqref{e:est-2}
$$
\lim_{h\to 0}\langle AE_j(\lambda),E_j(\lambda)\rangle=\int_{S^*M} a\,d\mu_{j\nu},
$$
which proves~\eqref{e:main-1}.
\end{proof}
%
%
We can explain~\eqref{e:lmul} using the theory of semiclassical
Lagrangian distributions (see~\cite[Chapter~6]{g-s}
or~\cite[Section~2.3]{svn} for a detailed account,
and~\cite[Section~25.1]{ho4} or~\cite[Chapter~11]{gr-s} for the
closely related microlocal case) as follows.  By~\eqref{e:e-prime-j},
the function
$$
E'_j(\lambda)=1_{\mathcal C_j}\cdot\widetilde\chi(r-R)e^{(1/2+\Imag\lambda)r}e^{-ir/h}
$$
is a Lagrangian distribution associated to the Lagrangian
$\widehat{\mathcal A}^-_j$ from~\eqref{e:a-pm}, with the principal
symbol $\widetilde\chi(r-R)e^{(1/2+\Imag\lambda)r}$. Since $A_t$ is
pseudodifferential and compactly supported, $A_tE'_j(\lambda)$ is also
a Lagrangian distribution associated to $\widehat{\mathcal A}^-_j$,
and its principal symbol is the product of the principal symbol of
$E'_j(\lambda)$ and the restriction of the principal symbol of $A_t$
to $\widehat{\mathcal A}^-_j$, proving~\eqref{e:lmul}.

Finally, we estimate the scattering coefficient by the mass of Eisenstein
series on the outgoing set:
%
%
\begin{proof}[Proof of Theorem~\ref{t:scattering}]
Fix a cusp $\mathcal C_{j'}$ and take compactly supported and
compactly microlocalized $A\in\Psi^0((R,R+1))$ such that
$\WFh(A)\subset \{p_r>0\}$, and the principal symbol $a(r,p_r)$ of $A$
satisfies $a(R+1/2,1)\neq 0$.  Let $\chi\in C_0^\infty(\mathbb R)$
have $\chi(0)=1$.  Denote $u(h)=E_j(\lambda(h))$ and recall that
$S_{jj'}(\lambda(h))=u_+^{j'}(h)$ is defined by~\eqref{e:u0}.  We then
have for each $\delta>0$,
\begin{equation}\label{e:u-minus}
|u_+^{j'}(h)| = \mathcal O(1) \|Au_0^{j'}(h)\|_{L^2}+\mathcal O(h^\infty)
= \mathcal O(1) \|A_\delta u(h)\|_{L^2(M)}+\mathcal O(h^\infty),
\end{equation}
uniformly in $\delta$, where $A_\delta=\chi(hD_\theta/\delta)A$ is a
pseudodifferential operator supported in $\mathcal C_{j'}$.  However,
by Theorem~\ref{t:main} we have as $h\to 0$
$$
\|A_\delta u(h)\|_{L^2(M)}^2
=\langle A_\delta^*A_\delta u(h),u(h)\rangle_{L^2(M)}
\to \int_{S^* \mathcal C_{j'}} |\chi(hp_\theta/\delta)a(r,p_r)|^2\,d\mu_{j\nu}.
$$
By our assumption, $\mu_{j\nu}(\widehat{\mathcal A}^+_{j'})=0$; therefore,
$$
\lim_{\delta\to 0}\lim_{h\to 0} \|A_\delta u(h)\|_{L^2(M)}=
\int_{\widehat{\mathcal A}^+_{j'}} |a(r,p_r)|^2\,d\mu_{j\nu}=0
$$
and we are done by~\eqref{e:u-minus}.
\end{proof}
%
%

\section{Hyperbolic surfaces}\label{s:hyperbolic}

In this section, we consider the special case $M=\Gamma\backslash
\mathbb H$, where $\mathbb H$ is the Poincar\'e half-plane model
of the hyperbolic plane and $\Gamma\subset\PSL(2,\mathbb R)$ is a
Fuchsian group of the first kind, so that $M$ is a finite area
hyperbolic surface. Denote by~$\pi_\Gamma:\mathbb H\to M$ the
projection map. The conformal boundary $\partial \mathbb H=\mathbb
R\cup\{\infty\}$ is a circle, as can be seen by using the Poincar\'e
ball model. This section is not used anywhere else in the paper and is
provided as a quick reference for readers familiar with the theory of
hyperbolic surfaces.

We first find an interpretation of~\eqref{e:expander} in terms of the
group action; this is parallel to the representation of measures
invariant under the Hamiltonian flow in Patterson--Sullivan theory
(see for example~\cite[Section~14.2]{b}). We parametrize the cosphere
bundle $S^*\mathbb H$ by
$$
\mathcal T:(\partial \mathbb H\times \partial \mathbb H)_\Delta\times \mathbb R\to S^*\mathbb H,
$$
where $(\partial \mathbb H\times \partial \mathbb H)_\Delta$ is the
Cartesian square of the circle $\partial \mathbb H$ minus the
diagonal. The map $\mathcal T$ is defined as follows: take
$(q_1,q_2)\in (\partial \mathbb H\times \partial \mathbb H)_\Delta$
and let $\gamma_{q_1q_2}(t)$ be the unique unit speed geodesic
(that is, a semicircle in the half-plane model) going
from $q_1$ to $q_2$, parametrized so that $\gamma(0)$ is the point of
$\gamma$ closest to $i\in \mathbb H$. We put
$$
\mathcal T(q_1,q_2,t)=(\gamma_{q_1q_2}(t),\dot\gamma_{q_1q_2}(t)).
$$
Now, consider a Radon measure $\mu$ on $S^* M$ satisfying~\eqref{e:expander}.
We can lift it to a measure $\mu'$ on $S^*\mathbb H$; then
\begin{equation}
  \label{e:mu-prime}
\mathcal T^*\mu'=\tilde\mu\times e^{-2\nu t}\,dt,
\end{equation}
where $\tilde\mu$ is some Radon measure on $(\partial \mathbb H\times
\partial \mathbb H)_\Delta$.

For each $\gamma\in \PSL(2, \mathbb R)$, we can calculate
$$
\gamma(\mathcal T(q_1,q_2,t))=\mathcal T\bigg(\gamma(q_1),\gamma(q_2),
t+{1\over 2}\log\bigg|{\gamma'_{\mathbb B}(q_1)\over\gamma'_{\mathbb B}(q_2)}\bigg|\bigg),
$$
where $\gamma'_{\mathbb B}(q)$ is the derivative of $\gamma$ considered as a
transformation on the ball model $\mathbb B$ with the identification map
$\mathbb H\to \mathbb B$ given by $z\mapsto (z-i)/(z+i)$; if
\begin{equation}
  \label{e:gamma}
\gamma(z)={az+b\over cz+d},\
\begin{pmatrix}a & b\\ c & d\end{pmatrix}\in\SL(2,\mathbb R),
\end{equation}
then
$$
|\gamma'_{\mathbb B}(q)|={q^2+1\over (aq+b)^2+(cq+d)^2}.
$$
We see then that the measure $\mu'$
defined by~\eqref{e:mu-prime} is invariant under the action of
$\Gamma$ on $S^* \mathbb B$ if and only if
for each $\gamma\in\Gamma$,
\begin{equation}
  \label{e:equivariance}
\gamma^*\tilde\mu=|\gamma'_{\mathbb B}(q_1)|^{\nu}|\gamma'_{\mathbb B}(q_2)|^{-\nu}\tilde\mu,
\end{equation}
where
$$
(\gamma^*\tilde\mu)(A)=\tilde\mu((\gamma\times\gamma)(A)),\
A\subset (\partial \mathbb B\times \partial \mathbb B)_\Delta.
$$
In particular, if $\hat\mu$ is a Radon measure on $\partial \mathbb B$ such that
for each $\gamma\in\Gamma$,
\begin{equation}
  \label{e:hat-mu-cond}
\gamma^*\hat\mu=|\gamma'_{\mathbb B}(q)|^{2\nu+1}\hat\mu,
\end{equation}
then a measure $\tilde\mu$ satisfying~\eqref{e:equivariance}
is given by (compare with~\cite[(14.14)]{b}, bearing in mind
that we use the half-plane model)
\begin{equation}
  \label{e:tilde-mu}
\tilde\mu=|q_1-q_2|^{-2(\nu+1)}|q_1+i|^{2(\nu+1)}
|q_2+i|^{2\nu}\hat\mu\times dq_2.
\end{equation}

Now, fix a cusp region $\mathcal C_j$ on $M$ and assume for simplicity
that $\infty\in \partial\mathbb H$ is a preimage of the corresponding
cusp. Let $\Gamma_\infty$ be the group of all elements of $\Gamma$ fixing
$\infty$; without loss of generality, we may assume that it is
generated by the shift $z\to z+1$. Then all the preimages of the cusp
of $\mathcal C_j$ are given by
$$
\{\gamma(\infty)\mid\gamma\Gamma_\infty\in \Gamma/\Gamma_\infty\};
$$
i.e., they are indexed by right cosets of $\Gamma_\infty$ in $\Gamma$. Note that
for $\gamma$ given by~\eqref{e:gamma},
\begin{equation}
  \label{e:ders}
\gamma(\infty)=a/c,\
|\gamma'_{\mathbb B}(\infty)|={1\over a^2+c^2}.
\end{equation}
A canonical system of coordinates on $\mathcal C_j$ is given by
\begin{equation}
  \label{e:c-q}
(r,\theta)\in (R_j,\infty)\times \mathbb S^1\mapsto \pi_\Gamma\bigg({\theta+ie^r\over 2\pi}\bigg).
\end{equation}
%
%
\begin{prop}\label{l:lift}
The lift of the measure $\mu_{j\nu}$ defined in~\eqref{e:mu-j-nu}
corresponds under~\eqref{e:mu-prime} to $(2\pi)^{2\nu+1}\tilde\mu$,
with $\tilde\mu$ given by~\eqref{e:tilde-mu}, and
\begin{equation}\label{e:hat-mu}
\hat\mu=\sum_{\gamma\Gamma_\infty\in\Gamma/\Gamma_\infty}
{\delta_{a/c}\over (a^2+c^2)^{2\nu +1}};
\end{equation}
here $\delta$ denotes a delta measure. (Note that the values $a/c$ are
distinct for different cosets, as $\Gamma_\infty$ is the stabilizer of
$\infty$.)
\end{prop}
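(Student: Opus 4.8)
The plan is to unwind the definition of $\mu_{j\nu}$ in \eqref{e:mu-j-nu}, lift it to $S^*\mathbb H$, and express the result in terms of the parametrization $\mathcal T$ and the boundary measure $\hat\mu$. First I would take a continuous compactly supported test function $a$ on $S^*M$ and pull back to $\mathbb H$: by \eqref{e:mu-j-nu}, $\int a\,d\mu_{j\nu}$ is the $t\to+\infty$ limit of $e^{-2\nu t}\int_{\mathcal C_j} e^{2\nu r}(a\circ\exp(tV))(r,\theta,-1,0)\,drd\theta$, and using the canonical coordinates \eqref{e:c-q}, the set $\widehat{\mathcal A}^-_j=\{p_r=-1,p_\theta=0\}$ lifts to the set of inward unit covectors along vertical geodesics $\{\Real z=\theta/(2\pi)\}$ in the fundamental domain for $\Gamma_\infty$, i.e.\ those geodesics emanating from $\infty\in\partial\mathbb H$. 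Propagating these along the flow for time $t$ and letting $t\to\infty$ spreads the measure $e^{2\nu r}\,drd\theta$ (in the $r$-variable, $e^{2\nu r}=e^{2\nu t}\cdot\const$ along the flow up to the decaying factor) over all of $\mathcal A^-_j$, which lifts to the union of all geodesics running into any $\Gamma$-translate $\gamma(\infty)$ of the cusp point.

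Next I would identify the boundary measure. A geodesic in $\mathcal A^-_j$ lifted to $\mathbb H$ has one endpoint at some $\gamma(\infty)=a/c$ (with $\gamma\Gamma_\infty$ ranging over $\Gamma/\Gamma_\infty$) and the other endpoint arbitrary on $\partial\mathbb H$. Under $\mathcal T$, the flow direction along such a geodesic is the $t$-variable, so the measure $e^{-2\nu t}\,dt$ in \eqref{e:mu-prime} is exactly what the $e^{2\nu r}\,dr$-factor of \eqref{e:mu-j-nu} produces after the $e^{-2\nu t}$ normalization and the coordinate change from $r$ to arclength $t$ (recall $r$ and $t$ differ by an additive constant along a vertical geodesic, and the renormalization kills the constant). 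The remaining factor, the ``$\theta$-direction'' measure $d\theta=2\pi\,d(\theta/(2\pi))$, must be matched against the formula \eqref{e:tilde-mu} for $\tilde\mu$ restricted to geodesics ending at $\infty$: at $q_1=\infty$ the factors $|q_1-q_2|^{-2(\nu+1)}|q_1+i|^{2(\nu+1)}$ must be interpreted as a limit, and one computes that the product tends to $1$ (the $|q_1-q_2|$ and $|q_1+i|$ blow up at the same rate with the correct exponent), leaving $|q_2+i|^{2\nu}\hat\mu\times dq_2$ with a point mass of $\hat\mu$ at $\infty$. To get the point mass at a general $\gamma(\infty)=a/c$ one applies the equivariance \eqref{e:equivariance}: pushing the ``$q_1=\infty$'' piece forward by $\gamma$ transforms the coefficient by $|\gamma'_{\mathbb B}(\infty)|^{\nu}$ on the $q_1$-slot, and combined with the requirement \eqref{e:hat-mu-cond} on $\hat\mu$ this forces the weight at $a/c$ to be $|\gamma'_{\mathbb B}(\infty)|^{2\nu+1}=(a^2+c^2)^{-(2\nu+1)}$, using \eqref{e:ders}. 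Summing over cosets and bookkeeping the powers of $2\pi$ coming from the $1/(2\pi)$ in \eqref{e:c-q} — each $dr\,d\theta$ contributes, and the Jacobians in \eqref{e:tilde-mu} are homogeneous of the relevant degree — gives the overall constant $(2\pi)^{2\nu+1}$.

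Concretely, the steps in order are: (1) reduce to computing $\mathcal T^*(\text{lift of }\mu_{j\nu})$ against test functions; (2) compute the lift of $\widehat{\mathcal A}^-_j$ and of the measure $e^{2\nu r}\,drd\theta$ under \eqref{e:c-q}, obtaining the contribution from geodesics ending at $\infty$ as $(2\pi)^{2\nu+1}\delta_\infty$ in the $q_1$-variable, times the correct $q_2$- and $t$-factors; (3) verify that \eqref{e:tilde-mu} with $\hat\mu=\delta_\infty$ reproduces exactly this contribution, by taking the boundary limit $q_1\to\infty$ in the explicit prefactors of \eqref{e:tilde-mu}; (4) use $\Gamma$-invariance of the lifted measure together with the transformation law \eqref{e:equivariance}, equivalently \eqref{e:hat-mu-cond}, to propagate the single point mass to the full sum \eqref{e:hat-mu} over $\Gamma/\Gamma_\infty$, reading off the weights from \eqref{e:ders}; (5) check the constant $(2\pi)^{2\nu+1}$ by tracking the $2\pi$ factors through \eqref{e:c-q} and the homogeneity of \eqref{e:tilde-mu}.

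The main obstacle will be step (3): making sense of \eqref{e:tilde-mu} when $q_1=\infty$, since several factors are singular there and the formula was written for finite $q_1,q_2$. The cleanest route is to never set $q_1=\infty$ directly but instead to work with a fixed $\gamma_0\in\Gamma$ (or an auxiliary M\"obius transformation) carrying $\infty$ to a finite point, do the whole computation at that finite point where \eqref{e:tilde-mu} is manifestly valid, and then transport back; the equivariance \eqref{e:equivariance} guarantees consistency. A secondary nuisance is bookkeeping the constant, which I would pin down by testing both sides of the claimed identity against one convenient function (say one supported near a single lift of the cusp), so that only the $q_1=\infty$ term of \eqref{e:hat-mu} contributes and the constant can be read off by a single explicit integral.
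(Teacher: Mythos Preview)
Your approach is correct in outline but runs in the opposite direction from the paper's proof and is consequently more laborious. The paper does not compute the lift of $\mu_{j\nu}$ and then read off $\hat\mu$; instead it \emph{starts} from the candidate $\hat\mu$ in~\eqref{e:hat-mu}, observes immediately from~\eqref{e:ders} that it satisfies the equivariance~\eqref{e:hat-mu-cond}, and hence that the measure $\mu$ on $S^*M$ built from it via~\eqref{e:tilde-mu} and~\eqref{e:mu-prime} automatically satisfies the exponential decay~\eqref{e:expander} and is supported on $\mathcal A^-_j$. Since $\mu_{j\nu}$ is characterized among such measures by its restriction to $\widehat{\mathcal A}^-_j$ (this is implicit in the defining limit~\eqref{e:mu-j-nu}), it suffices to check that $\mu$ and $(2\pi)^{-2\nu-1}\mu_{j\nu}$ agree there. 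That check is a single explicit computation using the closed formula $\mathcal T(\infty,q,t)=(q+i|i+q|e^{-t},-ie^t/|i+q|)$ for vertical geodesics, which sidesteps entirely the limit $t\to+\infty$, the summation over cosets, and your worry about the indeterminate form in~\eqref{e:tilde-mu} at $q_1=\infty$.

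In short, your steps (2)--(3) are essentially the paper's whole proof; your steps (1), (4), and (5) are replaced by the one-line observation that both measures satisfy the same characterizing properties, so agreement on $\widehat{\mathcal A}^-_j$ suffices. The auxiliary M\"obius transformation you propose for handling $q_1=\infty$ is unnecessary: the vertical-geodesic formula for $\mathcal T(\infty,q,t)$ is elementary in the half-plane model, and the constant $(2\pi)^{2\nu+1}$ drops out of that same short computation rather than requiring a separate test-function argument.
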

\begin{proof}
The measure $\hat\mu$ is well-defined, as the series
\begin{equation}
  \label{e:series}
\sum_{\gamma\Gamma_\infty\in\Gamma/\Gamma_\infty}{1\over (a^2+c^2)^{2\nu+1}}=
\sum_{\gamma\Gamma_\infty\in\Gamma/\Gamma_\infty}(\Imag\gamma^{-1}(i))^{2\nu+1}
\end{equation}
converges, by convergence of Eisenstein series~\eqref{e:eis-actual}.
By~\eqref{e:ders}, the measure $\hat\mu$
satisfies~\eqref{e:hat-mu-cond}; therefore, it produces a
measure~$\mu$ supported on the cosphere bundle $S^*M$ and
satisfying~\eqref{e:expander}.  Moreover, since $\hat\mu$ is supported
on the set of the preimages of the cusp of $\mathcal C_j$, $\mu$ is
supported on $\mathcal A^-_j$. It then suffices to study the
restriction of $\mu$ to $\widehat{\mathcal A}^-_j$. To this end, take
$A\subset (R_j,\infty)\times \mathbb S^1$ and consider
$$
\widetilde A=\{(r,\theta;-1,0)\in T^*\mathcal C_j\mid
(r,\theta)\in A\}\subset \widehat{\mathcal A}^-_j.
$$
Since
$$
\mathcal T(\infty,q,t)=\bigg(q+i|i+q|e^{-t},-i{e^t\over |i+q|}\bigg),
$$
we get
$$
\begin{gathered}
\widetilde A=\pi_\Gamma\mathcal T(\{(\infty,q,t)\mid (q,t)\in\check A\}),\\
\check A=\{(\theta/(2\pi),-r+\ln(2\pi)+\ln|i+\theta/(2\pi)|)\mid (r,\theta)\in A\}. 
\end{gathered}
$$
Then
$$
\begin{gathered}
\mu(\widetilde A)
=\int_{(q,t)\in\check A} |i+q|^{2\nu}e^{-2\nu t}\,dqdt
=(2\pi)^{-2\nu-1}\int_A e^{2\nu r}\,drd\theta
\end{gathered}
$$
and the proof is finished by the definition~\eqref{e:mu-j-nu} of $\mu_{j\nu}$.
\end{proof}
%
%
In particular, for the modular surface the measure $\hat\mu$
is given by
$$
\hat\mu=\sum_{m,n\in \mathbb Z\atop n\geq 0,\ m\perp n}
{\delta_{m/n}\over (m^2+n^2)^{2\nu+1}}. 
$$
Finally, we note that for $\nu>1/2$ one can prove Theorem~\ref{t:main}
for hyperbolic surfaces using the series representation for the
Eisenstein function
\begin{equation}
  \label{e:eis-actual}
\widetilde E(\lambda;z)
=(2\pi)^{1/2-i\lambda}\sum_{\Gamma_\infty\gamma\in\Gamma_\infty\backslash\Gamma}
(\Imag\gamma(z))^{1/2-i\lambda},\
z\in \mathbb H.
\end{equation}
This series converges absolutely~\cite[Theorem~2.1.1]{k}; since it is
invariant under $\Gamma$ and each of its terms solves~\eqref{e:main}
on $\mathbb H$, it gives rise to a solution~$\widehat E(\lambda,z)$
of~\eqref{e:main}.  It can also be seen that~\eqref{e:eis-actual}
converges in $L^2$ of a fundamental domain of $\Gamma$, if we take out
the term with $\gamma=1$; therefore, $\widehat E(\lambda,z)$
satisfies~\eqref{e:eisenstein-bd} and we have
$$
\widetilde E(\lambda;z)=E_j(\lambda;\pi(z)).
$$
The $(2\pi)^{1/2-i\lambda}$ prefactor here is due to the fact that
in our normalization of the Eisenstein series, the incoming term is
given by $e^{(1/2-i\lambda)r}$, and a canonical system of coordinates
is given by~\eqref{e:c-q}, with a $2\pi$ factor there since the stabilizer
of the cusp is generated by the transformation $z\mapsto z+1$,
while we need the $\{r=0\}$ circle to have length $2\pi$.

One then proceeds as in the proof of~\cite[Theorem~2]{g-n}, by
analysing the microlocal limit of each term of the Eisenstein series
and showing that the off-diagonal terms of the sum $\langle A
\widetilde E,\widetilde E\rangle$ are negligible. (The analysis
of~\cite{g-n} is dramatically simplified, as we are not asking for an
estimate on the remainder and thus one can sum over an $h$-independent
number of the elements of the group in~\cite[Lemma~7]{g-n} and use
standard microlocal analysis.)

\noindent\textbf{Acknowledgements.} I would like to thank Maciej Zworski for
his interest in the project and helpful advice.  I would also like to
thank Steven Zelditch for several helpful discussions on the nature of
measures satisfying~\eqref{e:expander} and in particular directing me
to the formula~\eqref{e:tilde-mu}, and St\'ephane Nonnenmacher and
Yves Colin de Verdi\`ere for many suggestions for improving the
manuscript. I am especially thankful to an anonymous referee for
reading the paper carefully and many useful remarks. Finally, I am
grateful for partial support from NSF grant DMS-0654436.


\end{document}